\theoremstyle{plain}
\newtheorem{theorem}{Theorem}[section]
\newtheorem{lemma}[theorem]{Lemma}
\newtheorem*{lemma*}{Lemma}
\newtheorem{proposition}[theorem]{Proposition}
\newtheorem{corollary}[theorem]{Corollary}
\newtheorem*{corollary*}{Corollary}
\newtheorem{claim}[theorem]{Claim}
\newtheorem*{claim*}{Claim}
\newtheorem*{theorem*}{Theorem}
\newtheoremstyle{break}%
{}{}%
{\itshape}{}%
{\bfseries}{}
{\newline}{}
\theoremstyle{break}
\theoremstyle{definition}
\newtheorem{definition}[theorem]{Definition}
\newtheorem*{notation}{Notation}
\theoremstyle{remark}
\newtheorem{obs*}[theorem]{Observation}
\newtheorem*{remark}{Remark}
\newtheorem{remark*}[theorem]{Remark}
\newcommand{\N}{\mathbb{N}}
\newcommand{\R}{\mathbb{R}}
\newcommand{\itembf}[2]{%
\ifthenelse{\isempty{#1}}{\item {\bfseries #2}}{\item[\textbf{#1}]{\bfseries #2}}%
}
\newcommand{\dfn}{\mathrel{\mathop:}=}
\newcommand{\sqed}{\hfill $/\!\!/$}
\newcommand{\scratch}[1]{{\color{red}\textbf{#1}}}
\DeclareMathAlphabet{\mathbbold}{U}{bbold}{m}{n}
\DeclareMathAlphabet{\cmb}{OT1}{cmbr}{m}{n}
\DeclareMathOperator{\img}{im}
\DeclareMathOperator{\Map}{Map}
\DeclareMathOperator{\Co}{Co}
\DeclareMathOperator{\inter}{int}
\DeclareMathOperator{\diam}{diam}
\DeclareMathOperator{\asdim}{asdim}
\newlist{clist}{enumerate}{1}
\setlist[clist]{label=(\roman*),itemsep=0ex,
  topsep=.5ex,parsep=0ex,leftmargin=3.5em}
\newlist{citem}{itemize}{1}
\setlist[citem,1]{label=\textbullet,topsep=1ex,itemsep=1ex,parsep=0ex}
\renewcommand{\scratch}[1]{{}}
\newcommand{\A}{\mathcal{A}}
\newcommand{\G}{\mathcal{G}}
\newcommand{\LL}{\mathcal{L}}
\DeclareMathOperator{\el}{\mathcal{EL}}
\newcommand{\M}{\mathcal{M}}
\DeclareMathOperator{\SC}{\mathcal{S}}
\DeclareMathOperator{\base}{base}
\DeclareMathOperator{\trans}{trans}
\newcommand{\cc}{\mathcal{C}}
\newcommand{\X}{\mathscr{X}}
\DeclareMathOperator{\PMap}{PMap}
\newcommand{\K}{\mathcal{K}}
\begin{document}

\title
{The asymptotic dimension of the grand arc graph is infinite}
\author{Michael C.\ Kopreski}


\begin{abstract}
  Let $\Sigma$ be a compact, orientable surface of genus $g$, and let
  $\Gamma$ be a relation on $\pi_0(\partial \Sigma)$ such that the
  prescribed arc graph $\A(\Sigma,\Gamma)$ is Gromov-hyperbolic and
  non-trivial.  We show that $\asdim \A(\Sigma,\Gamma) \geq
  -\chi(\Sigma) - 1$, from which we prove 
  that the asymptotic dimension of the grand arc graph is infinite.
  More generally, an \textit{arc and curve model} on
  $\Sigma$ is a graph of 
  simple arc and curves on $\Sigma$, on which 
  $\PMap(\Sigma)$ acts by permuting vertices.  
  We prove that any connected, Gromov-hyperbolic
  cocompact arc and curve model $\M$ has $\asdim \M \geq g - \lceil\frac{1}{2}
  \chi(\Sigma)\rceil$, and that a broad class of arc and curve
  models on infinite-type surfaces has infinite
  asymptotic dimension.
\end{abstract}

\maketitle

\section{Introduction}

Let $\Sigma$ be a compact, orientable surface with boundary, and let
$\Gamma$ be a relation on $\pi_0(\partial \Sigma)$. 
A simple, essential arc $a$ in $\Sigma$ 
is \textit{$\Gamma$-allowed} if it joins boundary
components in $\Gamma$.  The
\textit{$\Gamma$-prescribed arc graph} $\A(\Sigma,\Gamma)$ is the
full subgraph of $\A(\Sigma)$ spanned by isotopy classes of
$\Gamma$-allowed arcs. 
We assume throughout that
$\A(\Sigma,\Gamma)$ is \textit{non-trivial}, \textit{i.e.}\ 
$\chi(\Sigma) \leq -1$,
$\Sigma \neq \Sigma_0^3$, and $\Gamma \neq \varnothing$.

We suppose $\A(\Sigma,\Gamma)$ is
$\delta$-hyperbolic. 
If $\Sigma = \Sigma_0^4$, then $\A(\Sigma,\Gamma) \subset
\A(\Sigma_0^4)$ is a quasi-tree
and $\asdim \A(\Sigma,\Gamma) = 1$.  Otherwise, we prove a lower
bound:
\begin{theorem}\label{thm:asdim}
  If $\A(\Sigma,\Gamma)$ is $\delta$-hyperbolic, then $
-\chi(\Sigma) - 1 \leq \asdim \A(\Sigma,\Gamma).
$
\end{theorem}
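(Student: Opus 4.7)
My plan is to bound $\asdim \A(\Sigma,\Gamma)$ below via the dimension of its Gromov boundary. For any visual Gromov-hyperbolic space $X$, Buyalo--Schroeder established $\dim \partial_\infty X \leq \asdim X - 1$ (for capacity dimension, which in turn dominates topological covering dimension on visual metrics). It therefore suffices to exhibit an embedded topological cell of dimension $-\chi(\Sigma) - 2$ inside $\partial_\infty \A(\Sigma,\Gamma)$.

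I would carry this out in two steps. \emph{First}, identify the boundary: the hyperbolicity of $\A(\Sigma,\Gamma)$ together with the $\Gamma$-witness hypothesis should yield a Klarreich-type description of $\partial_\infty \A(\Sigma,\Gamma)$ in terms of minimal filling laminations on $\Sigma$ arising as limits of $\Gamma$-allowed arcs, controlled via the coarsely Lipschitz subsurface projection $\pi_W$ to a maximal $\Gamma$-witness $W$. \emph{Second}, produce the $(-\chi(\Sigma)-2)$-cell by adapting the train-track construction used to compute $\dim \el(\Sigma)$ for the standard arc graph (à la Gabai and Hensel--Przytycki): fix a maximal recurrent train track $\tau$ supported in a maximal $\Gamma$-witness, every branch of which is traversed by some $\Gamma$-allowed arc. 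The measured-lamination polytope of $\tau$ projectivizes to a cell of the requisite dimension, whose generic (minimal, uniquely ergodic) members represent boundary points. Choosing $W$ maximal makes $-\chi(W)$ comparable to $-\chi(\Sigma)$, up to additive constants absorbed by the inequality.

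The principal obstacle is the second step: ensuring that the $\Gamma$-restriction does not lower the boundary's dimension. For restrictive $\Gamma$, a typical $\tau$-carried lamination need not be approximable by $\Gamma$-allowed arcs, so the cell might naively collapse. The witness condition furnishes the needed rigidity: because $W$ meets every $\Gamma$-allowed arc, $\pi_W$ exactly controls approximation in $W$, and a Baire-type or nerve-covering argument on the rational points of the $\tau$-polytope (which correspond to simple arcs or curves of bounded type in $W$, and lift to $\Gamma$-allowed arcs in $\Sigma$ through the witness) should promote the full polytope dimension into the boundary. Subsidiary technical inputs for Buyalo--Schroeder—visibility of $\A(\Sigma,\Gamma)$ and properness of a visual metric on $\partial_\infty$—ought to follow routinely from $\delta$-hyperbolicity together with the $\Mod(\Sigma,\Gamma)$-cocompact action on the set of $\Gamma$-allowed arc types.
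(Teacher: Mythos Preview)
Your overall strategy---bound $\asdim$ from below by $\dim(\text{boundary})+1$ and exhibit a high-dimensional piece of the boundary---is the paper's strategy as well, but the execution diverges at two points, and the first is a genuine gap.

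\textbf{Non-properness.} $\A(\Sigma,\Gamma)$ is locally infinite, hence non-proper, and its Gromov boundary is non-compact. The Buyalo--Schroeder inequality you invoke is stated for visual hyperbolic spaces, and ``visibility of $\A(\Sigma,\Gamma)$ and properness of a visual metric on $\partial_\infty$'' do \emph{not} follow routinely: the $\Mod(\Sigma,\Gamma)$-action has finitely many vertex orbits but is not metrically cocompact, and $\el_0(\Sigma)$ is not locally compact. The paper confronts this head-on: it isolates a \emph{compact} subspace $Z\subset\partial\A(\Sigma,\Gamma)$ and proves separately (Proposition~\ref{prop:bdry_cpt}, via a quasi-isometric embedding of the hyperbolic cone $\Co Z$ into $X$ using quasi-geodesic representatives of boundary points) that $\asdim X\geq\dim Z+1$ for arbitrary geodesic hyperbolic $X$ once $Z$ is compact. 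Your proposal does not supply a substitute for this step.

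\textbf{Boundary identification.} Your ``principal obstacle''---showing the $\Gamma$-restriction does not collapse the boundary---is real for the route you sketch, and the Baire/nerve argument you gesture at is not obviously sufficient. The paper avoids the obstacle entirely: rather than describe $\partial\A(\Sigma,\Gamma)$ directly, it uses that the tautological map $\iota:\A(\Sigma,\Gamma)\to\A(\Sigma)$ is coarsely surjective and, when $\Gamma$ is not bipartite, coarsely alignment-preserving (unicorn paths are preserved under $\iota$ and are uniformly close to geodesics on both sides). Dowdall--Taylor then gives an embedding $\partial\A(\Sigma)\cong\el_0(\Sigma)\hookrightarrow\partial\A(\Sigma,\Gamma)$, with the remaining hyperbolic cases handled by quasi-isometries to non-bipartite ones. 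The compact $Z$ is found inside $\el_0(\Sigma)$ by taking a planar subsurface $S=\Sigma_0^{n+4}$ containing $\partial\Sigma$, so that $\el(S)\subset\el_0(\Sigma)$, and applying Gabai's identification of $\el(S)$ with the $n$-dimensional N\"obeling space (where $n=-\chi(\Sigma)-2$), which by universality contains compact sets of dimension $n$. No train tracks, no subsurface projection to a witness, and no approximation of laminations by $\Gamma$-allowed arcs are needed.
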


\noindent
For $\Omega$ an infinite-type surface with finite
grand splitting, 
let $\G(\Omega)$ denote the \textit{grand arc graph} on $\Omega$
\cite{grand}.  By applying Theorem~\ref{thm:asdim}, we obtain the 
following:
\begin{theorem}\label{thm:ga}
  If $\G(\Omega)$ is non-empty and connected, then 
  $\asdim \G(\Omega) = \infty$.
\end{theorem}

More generally, 
an \textit{arc and curve model} on 
a surface $\Omega$ is a connected graph whose vertices are collections of
(possibly intersecting) simple arcs and curves, with an action of
$\PMap_c(\Omega)$ induced by the permutation of its vertices.  
An arc and curve model $\M$ is \textit{witness-cocompact} if it has a
(compact) witness and in each witness has uniformly bounded
geometric (self-)intersection over the vertices and edges in $\M$.

\begin{theorem}\label{thm:ma_asdim}
  If $\Omega$ is compact and $\M$ is $\delta$-hyperbolic, then
  $\asdim \M \geq g(\Omega) - 
  \lceil\frac 12 \chi(\Omega)\rceil$. If 
  $\Omega$ is infinite-type, then $\asdim \M = \infty$.
\end{theorem}

In Section~\ref{sec:asdim}, we use the theory of
alignment-preserving maps \cite{align} to show that the Gromov
boundary
$\partial \A(\Sigma,\Gamma)$ contains $\partial \A(\Sigma)$. 
From results of Gabai \cite{gabai} and Schleimer \cite{phoon}  
we obtain a compact subspace $Z \subset \partial \A(\Sigma,\Gamma)$ 
of dimension $-\chi(\Sigma)-2$. We then prove that 
$\asdim \A(\Sigma,\Gamma)
\geq \dim Z + 1$, extending a result for proper $\delta$-hyperbolic
spaces, whence Theorem~\ref{thm:asdim} follows.

In Section~\ref{sec:ga}, we show that witness subsurfaces $W \subset
\Omega$ for $\G(\Omega)$ of
arbitrarily large complexity admit prescribing relations $\Gamma$
such that $\A(W,\Gamma)$ quasi-isometrically embeds into
$\G(\Omega)$, where $\Omega$ is an infinite-type surface with 
finite grand splitting.  In fact, $W$ may be chosen so that either
$\A(W,\Gamma)$ has large coarse rank or it is
$\delta$-hyperbolic: Theorem~\ref{thm:ga} thus follows from
Theorem~\ref{thm:asdim} and the monotonicity of asymptotic
dimension.

Section~\ref{sec:cm} generalizes the techniques in
Sections~\ref{sec:asdim} and \ref{sec:ga} to witness-cocompact
arc and curve models,
which include prescribed arc graphs, the grand arc graph, 
the marking complex, 
and many other multiarc and curve graphs. 
In addition to tools developed in Section~\ref{sec:asdim}, 
we utilize properties of the hierarchically hyperbolic
structure of such graphs in the finite-type setting \cite{hhs_gen}.
Theorem~\ref{thm:ma_asdim} follows analogously in the witness-cocompact
case.

\begin{remark}
  For the reader interested in only Theorem~\ref{thm:ma_asdim}
  (which does imply Theorem~\ref{thm:ga} and a weaker version of
  Theorem~\ref{thm:asdim}, albeit with more technology
  than necessary), it suffices to read 
  Sections~\ref{sec:lowerbound} and \ref{sec:cm}.
\end{remark}

\subsection{Background} An orientable
surface $\Omega$ has \textit{infinite
topological type} if its fundamental group is not finitely
generated, or equivalently 
if $\inter(\Omega)$
has infinite genus or infinitely many punctures (we typically assume
$\partial \Omega = \varnothing$). Beginning with
a 2009 blog post of Calegari \cite{danny},
mapping class groups of infinite-type surfaces have been 
objects of considerable contemporary study: see \cite{avbig,
problemlist} for surveys of recent results and open problems.

An infinite-type surface $\Omega$ 
is classified by its genus and
\textit{end space}, which is obtained as the inverse limit of the
complementary components of a compact exhaustion \cite{richards}; 
its mapping class group $\Map(\Omega)$ is a non-compactly
generated Polish group.  Given mild assumptions, Mann and 
Rafi \cite{mannrafi}
classify when $\Map(\Omega)$ admits a generating set that is
\textit{coarsely bounded (CB)}, or 
bounded in any left-invariant metric, and hence a well defined
quasi-isometry type in the sense of \cite{rosendal}. 
Mann--Rafi also define a preorder on the
ends of $\Omega$ corresponding to topological complexity.  
We denote by $\mathscr{M}(\Omega)$ the non-empty subspace of maximal
ends with respect to this preorder. 

When $\Map(\Omega)$ is locally CB (and in particular when
it is CB-generated), Bar-Natan and Verberne define the \textit{grand
splitting} $\SC(\Omega)$, a canonical and $\Map(\Omega)$-invariant
partition of $\mathscr{M}(\Omega)$  
into finitely many disjoint sets $E_i \in \SC(\Omega)$, 
each of which is either a singleton or Cantor set.
 A \textit{grand arc} in $\Omega$
is a bi-infinite simple 
arc converging to ends in distinct sets in the
grand splitting \cite{grand}.
\begin{definition}[Bar-Natan--Verberne]\label{def:ga}
  Let $\Omega$ be an infinite-type surface. 
  The \textit{grand arc graph}
  $\G(\Omega)$ is the simplicial graph with vertices corresponding 
  to isotopy classes of grand arcs and edges determined by
disjointness.  
\end{definition}

The grand arc graph $\G(\Omega)$ is an arc and curve model
for $\Omega$ which generalizes the ray graph defined by 
Calegari \cite{danny} on $S^2 \setminus \text{Cantor set}$ and 
for surfaces with stable endspace extends
the omnipresent arc graph defined by Fanoni--Ghaswala--McLeay
\cite{fgm}.  $\Map(\Omega)$ acts naturally on $\G(\Omega)$ by
isometries.  Bar-Natan--Verberne classify the
$\delta$-hyperbolicity of $\G(\Omega)$ and show that
when $\G(\Omega)$ is $\delta$-hyperbolic, the action of 
$\Map(\Sigma)$ is quasi-continuous, extends continuously to
$\partial \G(\Omega)$, and has loxodromic elements.  

\begin{notation}
  We typically denote by $\Sigma$ a compact, orientable surface,
and by $\Omega$ an arbitrary orientable surface that may have either
finite or infinite topological type. 
\end{notation}

\subsubsection{Prescribed arc graphs, witnesses}
Prescribed arc graphs were defined by the author in \cite{pac}
as arc and curve models of finite-type surfaces that
quasi-isometrically embed into $\G(\Omega)$.  Excepting trivial
cases they are connected and infinite-diameter and  
their $\delta$-hyperbolicity is fully
determined by the prescribing relation $\Gamma$:

\begin{theorem}[{\cite[Thm.\ 1.3]{pac}}]\label{thm:pac_hyp}
Assume that $\A(\Sigma,\Gamma)$ is non-trivial.
Then if $\chi(\Sigma) \geq -2$ or
$\Sigma = \Sigma_0^{n+1}$ and $\Gamma$ is a $n$-pointed star then
$\A(\Sigma,\Gamma)$ is $\delta$-hyperbolic.  Otherwise,
$\A(\Sigma,\Gamma)$ is (uniformly) $\delta$-hyperbolic if and only
if $\Gamma$ is not bipartite.
\end{theorem}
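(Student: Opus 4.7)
The plan is to split according to the trichotomy in the statement. For the low-complexity case $\chi(\Sigma) \geq -2$, I would enumerate: the possibilities are few enough that $\A(\Sigma,\Gamma)$ can be analyzed directly. In particular, $\A(\Sigma_0^4,\Gamma)$ sits as a full subgraph of the Farey graph $\A(\Sigma_0^4)$ and is thus a quasi-tree, and the remaining cases are handled by finite-diameter or explicit-tree arguments. For the pointed-star case on $\Sigma_0^{n+1}$, every $\Gamma$-allowed arc has one endpoint on the distinguished boundary $b_0$; capping $b_0$ to a puncture yields a Lipschitz projection to a tree-like arc complex on an $(n+1)$-punctured sphere, from which $\delta$-hyperbolicity is inherited.

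The main case is $\chi(\Sigma) \leq -3$ and $\Gamma$ not a pointed star. For the non-bipartite direction I would adapt the Hensel--Przytycki--Webb unicorn construction. Given $\Gamma$-allowed arcs $a,b$, a unicorn arc $c \in [a,b]$ concatenates a prefix of $a$ with a suffix of $b$, so its endpoints lie on the initial boundary of $a$ and the terminal boundary of $b$. When these endpoints sit on the same part of a putative bipartition of $\Gamma$, $c$ need not be $\Gamma$-allowed. The non-bipartite assumption furnishes an odd closed walk in $\Gamma$, which I would use as a ``parity-switching'' detour: by pre- or post-composing unicorns along a fixed odd walk one builds a $\Gamma$-allowed path between any two $\Gamma$-allowed arcs, inflating lengths only by a constant depending on the shortest odd cycle. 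Verifying the HPW thin-triangle criterion for these modified unicorn paths yields uniform $\delta$-hyperbolicity.

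For the bipartite direction, suppose $\Gamma$ has a bipartition $V_1 \sqcup V_2$, and that $\Sigma$ is neither of low complexity nor $\Sigma_0^{n+1}$ with $\Gamma$ a star. Under these hypotheses I would exhibit two disjoint essential subsurfaces $W_1, W_2 \subset \Sigma$ each meeting boundary components from both parts and each supporting an infinite family of pairwise disjoint $\Gamma$-allowed arcs. Independent variation in the two families, effected by Dehn twists in disjoint filling curves of $W_1$ and $W_2$, produces a coarse embedding of $\Z^2$ into $\A(\Sigma,\Gamma)$, obstructing Gromov hyperbolicity. The combinatorial input --- that enough ``room'' remains after fixing $\Gamma$ --- is what forces the low-complexity and star-case exclusions to appear as genuine exceptions.

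The main obstacle is the non-bipartite side. The classical unicorn argument uses freely that any pair of initial/terminal endpoints of input arcs yields an admissible new arc, and the odd-cycle detour must be controlled so that the thin-triangle constants do not depend on $\Sigma$. Arranging this --- so that unicorns and their parity-switching bridges together verify the HPW criterion with a single uniform $\delta$ --- is the delicate step; once it is in place, monotonicity under inclusion of $\Gamma$'s handles the non-uniform bipartite failure, completing the equivalence.
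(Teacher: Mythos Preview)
This theorem is not proved in the present paper: it is quoted verbatim from \cite[Thm.~1.3]{pac} and used as a black box. There is therefore no proof here to compare your proposal against. What the paper does reveal about the argument in \cite{pac} comes only through scattered citations: unicorn paths in $\A(\Sigma,\Gamma)$ are uniformly Hausdorff-close to geodesics when $\Gamma$ is not bipartite (\cite[\S3]{pac}, used in Lemma~\ref{lem:iota_ap}), and the $n$-pointed star case on $\Sigma_0^{n+1}$ is handled by showing $\A(\Sigma,\Gamma)$ is quasi-isometric to $\A(\Sigma,\ell_0)$ for a single loop $\ell_0$ (\cite[Lem.~5.4]{pac}).

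Your sketch is broadly consonant with these hints on the non-bipartite side: an HPW-style unicorn argument is indeed what \cite{pac} appears to run. Your treatment of the star case, however, differs from what the paper indicates \cite{pac} actually does: you propose capping the hub boundary to a puncture and projecting to an arc complex on a punctured sphere, whereas \cite{pac} evidently reduces instead to the single-loop relation on the same surface. Your bipartite obstruction via disjoint subsurfaces and a $\Z^2$ quasi-flat is plausible but is not something the present paper comments on. Finally, your low-complexity enumeration is incomplete as stated: $\chi(\Sigma)\geq -2$ includes $\Sigma_1^2$, which is neither a quasi-tree case nor finite-diameter, and the paper's later remarks (case~(ii) after Corollary~\ref{cor:iota_boundary}) suggest this surface requires separate attention.
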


We note that if $\Gamma \subset \Gamma'$ then every
$\Gamma$-allowed arc is $\Gamma'$-allowed, which induces a
simplicial map $\iota : \A(\Sigma,\Gamma) \to
\A(\Sigma,\Gamma')$.  This map is $3$-coarsely surjective 
\cite[Lem.~2.10]{pac}.  In particular, since the prescribed arc
graph with the complete relation is exactly $\A(\Sigma)$,
$\A(\Sigma,\Gamma)$ always coarsely surjects onto $\A(\Sigma)$.

A compact, essential ($\pi_1$-injective,
non-peripheral) subsurface without pants components 
is a \textit{witness} for a given arc and curve model if 
each component intersects every vertex. We call a
witness $W\subset \Sigma$ for $\A(\Sigma,\Gamma)$ a
\textit{$\Gamma$-witness}.

\subsubsection{Boundaries of non-proper $\delta$-hyperbolic spaces}  
In general, if $\A(\Sigma,\Gamma)$ is non-trivial then it is
non-proper, and likewise for any admissible arc and curve model 
with sufficient complexity.
For a geodesic $\delta$-hyperbolic space $X$, by 
$\partial X$ we always mean the sequential boundary of $X$;  
when $X$ is non-proper, $\partial X$ may be non-compact.
In this setting, $\partial X$ does not coincide with the geodesic
boundary, but is instead homeomorphic to the quasi-geodesic boundary 
\cite{yo}.  We will make use of the following
statement by Hasegawa, from a construction of
Kapovich--Benakli \cite[Rmk.~2.16]{kb}: 

\begin{remark*}[{\cite[Prop.~4]{yo}}]\label{rmk:qgr} 
  Fixing $x_0 \in X$, for any $z \in \partial X$ there exists a
  $(1+4\delta,12\delta)$-quasi-geodesic ray $\rho : [0,\infty) \to
  X$ based at $x_0$ with $[\rho(n)] = z$.
\end{remark*}

Any quasi-isometry between geodesic
 $\delta$-hyperbolic spaces $X \to Y$ extends to a map $X \cup
\partial X \to Y \cup \partial Y$ that restricts to
a homeomorphism on the
boundaries (\textit{e.g.}\ applying the proof of 
\cite[Thm.~11.108]{dk}).
Given $x,y \in X \cup \partial X$, let $(x|y)_{x_0}$ denote their
Gromov product at $x_0$.  We occassionally 
omit the basepoint, which is changeable up to bounded error. 

\subsubsection{Ending laminations} 
Let $\chi(\Sigma) \leq -1$, hence 
fix a (finite-area) hyperbolic metric
for $\Sigma$ with geodesic boundary.  
We recall that a \textit{geodesic lamination} on
$\Sigma$ is a closed subset $L
\subset \Sigma$ which decomposes (in fact, uniquely) into
pair-wise disjoint simple geodesic leaves. 
$L$ is \textit{minimal}
if it has no proper sublaminations, or equivalently, 
if every leaf is dense in $L$.  

\begin{definition}\label{def:filling}
  Given a connected subspace $X \subset \Sigma$ with non-trivial
  $\pi_1$-image, if $Y \subset \Sigma$ is 
  the smallest essential subsurface 
  containing $X$ up to isotopy,  then $Y$ is
  \textit{filled} by $X$.  If $Y = \Sigma$, then $X$ is
  \textit{filling}.
\end{definition}

\begin{definition}\label{def:el0}
   The space of \textit{ending laminations} $\el(\Sigma)$ 
   is the set of filling minimal laminations 
  on $\Sigma$, equipped with the coarse Hausdorff topology.  
  Similarly, let $\el_0(\Sigma)$ 
  denote the space of minimal laminations that fill a subsurface
  containing $\partial \Sigma$, again with the coarse
  Hausdorff topology.
\end{definition}

$\el(\Sigma)$ and $\el_0(\Sigma)$ give explicit
descriptions for the hyperbolic boundaries of $\cc(\Sigma)$ and
$\A(\Sigma)$, respectively (see \cite{klarreich} and \cite{phoon}):

\begin{theorem}[Klarreich, Schleimer]\label{thm:el0bdry}
  $\el(\Sigma) \cong \partial \cc(\Sigma)$ and 
  $\el_0(\Sigma) \cong \partial \mathcal{A}(\Sigma)$.
\end{theorem}

\subsubsection{Markings} In Section~\ref{sec:cm}, we will make use
of \textit{markings} on surfaces in the sense of \cite{MMII}.  
For an essential simple closed curve $a \subset \Omega$, let
$\cc(a)$ denote the curve graph of the annulus with core $a$ and
$\pi_a$ the corresponding (set-valued) subsurface projection. 

\begin{definition}\label{def:marking}
  A \textit{marking}
  $\mu = \{(a_i,t_i)\}$ on a surface $\Omega$ is an essential
  simple multicurve $\{a_i\}$, denoted $\base \mu$, along with a
  collection of (possibly empty) diameter $1$ subsets $t_i \subset
  \cc (a_i)$; for $a_i \in \base \mu$, let
  $\trans_\mu(a_i) = t_i$ denote the associated 
  \textit{transversal.}
\end{definition}

\noindent
A marking $\mu$ is \textit{complete} 
if $\base \mu$ is a pants decomposition
and every transversal is non-empty.  If $\mu$ is complete and 
for each component $(a,t) \in \mu$ $t = \pi_a b$ for some simple
closed curve $b \neq a$ disjoint from $\base \mu \setminus \{a\}$ 
that intersects $a$ minimally, then $\mu$ is \textit{clean}.

\begin{definition}[{\cite[Def.~2.2]{hhs_gen}}]\label{def:lcmarking}
  A marking $\mu$ on a surface $\Omega$ is \textit{locally clean} if the maximal
  submarking $\mu'$ with only non-empty transversals is complete and
  clean in each component of $\Omega \setminus (\mu \setminus \mu')$
  that it intersects.
\end{definition}

Let $\Delta \subset \Omega$ be an essential, non-pants
subsurface.  Like multicurves, markings have a 
subsurface projection $\pi_\Delta(\mu) \subset \cc (\Delta)$.
If $\Delta$ is an annulus parallel to some $a \in \base
\mu$, then $\pi_\Delta(\mu) \dfn \trans_\mu(a) \subset \cc(\Delta)$.
Otherwise, $\pi_\Delta(\mu) \dfn \pi_\Delta(\base \mu)$. We say 
$\Delta$ intersects $\mu$ if and only if $\pi_\Delta(\mu) \neq
\varnothing$.  For an
essential simple closed curve $c \subset \Omega$, again let 
$\pi_c(\mu)$ denote the projection to the annulus with core $c$.

\begin{definition}\label{def:geom_int}
  Let $\mu,\nu$ be two markings on
  $\Omega$.  Then their \textit{geometric
  intersection number} $i(\mu,\nu)$ is defined as follows:
  \[
i(\mu,\nu) \dfn i(\base\mu,\base\nu) + \sum_{a \in \base \mu \cup
\base\nu} \diam_{\cc (a)}(\pi_a \mu \cup \pi_a \nu)
  \]
\end{definition}

\subsubsection{Alignment-preserving maps} We briefly recall 
the theory of
alignment-preserving maps from \cite{align}.
Let $X$ be a geodesic metric space.
Then a triple 
$(x,y,z) \in X^3$ is \textit{$K$-aligned} if $d(x,y) + d(y,z) \leq
d(x,z) + K$. A Lipschitz map between geodesic metric spaces 
$f : X \to Y$ is \textit{coarsely alignment preserving} if
there exists $K \geq 0$ for which $f$ 
maps any $0$-aligned triple in $X$ to a $K$-aligned triple in $Y$.

Suppose that $f : X \to Y$ is a coarsely alignment preserving map
between geodesic 
$\delta$-hyperbolic spaces.  Then we define $\partial_Y X
\subset \partial X$ to be 
\[ \partial_Y X \dfn \{[\gamma] \in \partial X \;\vline\;
\gamma : \R^+ \to X \text{ quasi-geodesic, } 
\diam_Y (f\gamma (\R^+)) = \infty \}.\]

\begin{theorem}[Dowdall--Taylor, {\cite[Thm.~3.2]{align}}]
  \label{thm:dtboundaries}
  Let $f : X \to Y$ be a coarsely surjective, coarsely alignment
  preserving map between geodesic 
  $\delta$-hyperbolic spaces.  Then $f$
  admits an extension to a homeomorphism $\partial f : \partial_Y X
  \to \partial Y$ such that if $x_n \to \omega \in
  \partial_Y X$, then $f(x_n) \to \partial f(\omega)$.
\end{theorem}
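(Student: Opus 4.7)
The plan is to construct $\partial f$ explicitly as a limit of images of quasi-geodesic rays, then verify the stated properties using alignment preservation combined with standard $\delta$-hyperbolic estimates. For $\omega \in \partial_Y X$, applying the Hasegawa/Kapovich--Benakli remark yields a $(1+4\delta,12\delta)$-quasi-geodesic ray $\gamma$ based at a fixed $x_0 \in X$ with $[\gamma(n)] = \omega$. For any $t_0 < t_m < t_n$, a Morse-lemma comparison with the geodesic from $\gamma(t_0)$ to $\gamma(t_n)$ lets one apply the coarse alignment-preserving property, producing a uniform constant $K'$ (depending on $\delta$, the Lipschitz and alignment constants of $f$, and the Morse constant) for which the triple $\bigl(f\gamma(t_0), f\gamma(t_m), f\gamma(t_n)\bigr)$ is $K'$-aligned in $Y$. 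A direct Gromov-product computation then gives
\[
  (f\gamma(t_m) \mid f\gamma(t_n))_{f\gamma(t_0)} \geq d_Y(f\gamma(t_0), f\gamma(t_m)) - K'/2,
\]
and the assumption $\omega \in \partial_Y X$ forces $f\gamma(\R^+)$ to have infinite diameter, so the right-hand side is unbounded along some subsequence. Hence $f\gamma(n)$ converges to a unique point of $\partial Y$, which I take as $\partial f(\omega)$. Independence from the choice of $\gamma$ follows because asymptotic quasi-geodesics fellow-travel at uniformly bounded distance, and the Lipschitz property of $f$ then forces a common limit in $\partial Y$.

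Next I would check injectivity and continuity. For injectivity, suppose $\omega \neq \omega'$ with representatives $\gamma, \gamma'$. The Gromov product $(\gamma(n) \mid \gamma'(n))_{x_0}$ stays bounded in $n$; fixing thin-triangle centres $p_n$, the triples $(x_0, p_n, \gamma(n))$ and $(x_0, p_n, \gamma'(n))$ are coarsely $0$-aligned, and alignment preservation combined with the Gromov-product estimate above detects the branching in $Y$, forcing $\partial f(\omega) \neq \partial f(\omega')$. Sequential continuity is then essentially built in: if $x_n \to \omega$ in $X \cup \partial_Y X$, one chooses points $\gamma(t_n)$ on the representative ray which fellow-travel $x_n$ up to bounded distance, and the Lipschitz property passes the convergence across $f$.

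The main obstacle is surjectivity. Given $\eta \in \partial Y$, choose a quasi-geodesic ray $\rho \colon \R^+ \to Y$ converging to $\eta$. By coarse surjectivity, there exist $x_n \in X$ with $d_Y(f(x_n), \rho(n))$ uniformly bounded; geodesically interpolating between successive $x_n$'s produces a candidate path in $X$. The goal is to show this path converges to some $\omega \in \partial X$, which then lies in $\partial_Y X$ (since its image tracks $\rho$) and satisfies $\partial f(\omega) = \eta$ by construction. The hard part is that $f$ does not control distances in reverse, so the lifted path is not \emph{a priori} a quasi-geodesic; however, alignment preservation used contrapositively shows that any large back-tracking in $X$ would force an image triple to fail $K$-alignment in $Y$, contradicting the quasi-geodesic property of $\rho$. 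Making this rigorous --- probably by iteratively extracting a quasi-geodesic subsequence from the $x_n$ and bounding its Gromov products in $X$ from below by applying alignment preservation to the corresponding bounds in $Y$ --- is the heart of the proof, and yields the homeomorphism statement together with the sequential convergence property.
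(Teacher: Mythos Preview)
The paper does not prove this theorem: it is quoted verbatim from Dowdall--Taylor \cite[Thm.~3.2]{align} and used as a black box to deduce Corollary~\ref{cor:iota_boundary}. There is therefore no ``paper's own proof'' to compare your proposal against.

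That said, your sketch is broadly in the spirit of how such results are established, and the outline for well-definedness and injectivity via alignment-preservation and Gromov-product estimates is sound. The surjectivity step is, as you note, the delicate point: you need that the lifted sequence $(x_n)$ in $X$ is itself convergent to a boundary point, but alignment preservation is a one-way condition (aligned triples in $X$ go to aligned triples in $Y$), and your ``contrapositive'' use of it is not quite right as stated---a triple in $X$ failing $0$-alignment can certainly still map to a $K$-aligned triple in $Y$. In Dowdall--Taylor's actual argument this is handled by first proving a stability lemma: the $f$-preimage of a bounded set, intersected with a geodesic in $X$, has uniformly bounded diameter. That lemma is what lets you promote coarse control in $Y$ back to coarse control along geodesics in $X$, and it is the missing ingredient in your surjectivity paragraph. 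With it in hand, one shows the $(x_n)$ form a Gromov sequence in $X$ and the rest follows.
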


\section{Asymptotic dimension of
$\A(\Sigma,\Gamma)$}\label{sec:asdim} 

When $\Sigma = \Sigma_0^4$, then $\A(\Sigma,\Gamma) \subset
\A(\Sigma_0^4)$ is an infinite-diameter connected
subgraph of a quasi-tree, hence likewise a quasi-tree: $\asdim
\A(\Sigma,\Gamma) = 1$.  For $\Sigma \neq \Sigma_0^4$, 
we first prove $\el_0(\Sigma) \cong \partial \A(\Sigma) \subset
\partial \A(\Sigma,\Gamma)$ for $\Gamma$ not bipartite.  

\begin{lemma}\label{lem:iota_ap}
  If $\Gamma$ is not bipartite
  then for any $\Gamma' \supset \Gamma$ the induced coarse
  surjection $\iota : \A(\Sigma,\Gamma) \to \A(\Sigma,\Gamma')$ is
  uniformly coarsely alignment-preserving.
\end{lemma}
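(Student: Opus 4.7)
The plan is to reduce the lemma to showing that $\iota$ sends geodesics in $\A(\Sigma,\Gamma)$ to uniform quasi-geodesics in $\A(\Sigma,\Gamma')$, from which coarse alignment preservation follows. The map $\iota$ is tautologically $1$-Lipschitz since every edge of $\A(\Sigma,\Gamma)$ is an edge of $\A(\Sigma,\Gamma')$. Given a $0$-aligned triple $(a,b,c)$ in $\A(\Sigma,\Gamma)$, choose a geodesic $\gamma$ from $a$ to $c$ through $b$; if $\iota\gamma$ is a uniform $(\lambda,\mu)$-quasi-geodesic in $\A(\Sigma,\Gamma')$, then the stability of quasi-geodesics in the $\delta'$-hyperbolic space $\A(\Sigma,\Gamma')$ places $\iota(b)$ within a uniform distance $R=R(\lambda,\mu,\delta')$ of any $\A(\Sigma,\Gamma')$-geodesic from $\iota(a)$ to $\iota(c)$, yielding $2R$-alignment with constants independent of $a,b,c$ and $\Gamma'$.

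Since $\Gamma$ contains an odd cycle and $\Gamma\subset\Gamma'$, the relation $\Gamma'$ is also non-bipartite; Theorem~\ref{thm:pac_hyp} then gives that $\A(\Sigma,\Gamma')$ is uniformly $\delta'$-hyperbolic with constants independent of $\Gamma'$. To produce the required quasi-geodesics I would invoke the $\Gamma$-unicorn paths used in the proof of Theorem~\ref{thm:pac_hyp} in \cite{pac}: for any $\Gamma$-allowed arcs $a,c$ the path $P_\Gamma(a,c)$---a Hensel-Przytycki-Webb unicorn path modified by short detours along the odd cycle of $\Gamma$ at each unicorn that fails to be $\Gamma$-allowed---is a uniform $(\lambda,\mu)$-quasi-geodesic in $\A(\Sigma,\Gamma)$, and the analogous $\Gamma'$-unicorn path $P_{\Gamma'}(a,c)$ is one in $\A(\Sigma,\Gamma')$.

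The heart of the argument is then to verify that $\iota(P_\Gamma(a,c))$ is itself a uniform quasi-geodesic in $\A(\Sigma,\Gamma')$. I would argue this by comparing it with $P_{\Gamma'}(a,c)$: every vertex of $P_\Gamma(a,c)$ is either a genuine unicorn (hence already a vertex of $P_{\Gamma'}(a,c)$) or a $\Gamma$-detour differing from an adjacent genuine unicorn by a bounded sequence of moves along the odd cycle, and therefore lies within uniformly bounded $\A(\Sigma,\Gamma')$-distance of a vertex of $P_{\Gamma'}(a,c)$. Hence $\iota(P_\Gamma(a,c))$ fellow-travels $P_{\Gamma'}(a,c)$ uniformly and inherits its quasi-geodesicity constants. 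Any $\A(\Sigma,\Gamma)$-geodesic fellow-travels $P_\Gamma(a,c)$ by stability of quasi-geodesics, so its $\iota$-image fellow-travels $\iota(P_\Gamma(a,c))$ in $\A(\Sigma,\Gamma')$ and is itself uniformly quasi-geodesic.

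The main obstacle is this comparison $\iota(P_\Gamma(a,c))\sim P_{\Gamma'}(a,c)$: it requires unpacking the explicit odd-cycle detour from \cite{pac} and checking that the per-detour excess length is absorbed uniformly into the quasi-geodesicity constants, yielding a bound independent of both $(a,c)$ and of $\Gamma'$.
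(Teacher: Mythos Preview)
Your proposal is correct and follows essentially the same approach as the paper: reduce alignment preservation to showing that $\iota$ carries geodesics to uniform (quasi-)geodesics by passing through the unicorn paths of \cite{pac}. The paper shortcuts your comparison step by observing that $\iota$ of a $\Gamma$-unicorn path \emph{is already} a unicorn path in $\A(\Sigma,\Gamma')$, so no per-detour bookkeeping against $P_{\Gamma'}(a,c)$ is required; it also singles out the exceptional case $\Sigma=\Sigma_1^2$ with $\Gamma$ two loops, handled separately via the quasi-isometry of \cite[Lem.~5.2]{pac}.
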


\begin{proof}
  We first claim that if $\Gamma$ is not bipartite, then geodesics
  in $\A(\Sigma,\Gamma)$ are uniformly (independent of $\Gamma$)
  Hausdorff close to unicorn paths with coarsely the same endpoints,
  and \textit{vice versa}.  If $\Gamma$ is not bipartite and if
  $\Sigma = \Sigma_1^2$ then $\Gamma$ is not two loops, then the
  claim holds by \cite[\S3]{pac}. If instead $\Sigma=\Sigma_1^2$
  and $\Gamma = \ell_1 \cup \ell_2$ is two loops, then $\iota :
  \A(\Sigma,\ell_1) \to \A(\Sigma,\Gamma)$ is a quasi-isometry by
  \cite[Lem.~5.2]{pac} and we apply the Morse lemma. We observe that
  if $\Gamma$ is not bipartite then neither is $\Gamma'$. 


  We claim that for any
  geodesic $\gamma$ between $a,b\in\A(\Sigma,\Gamma)$, 
  $\iota \gamma$ is
  uniformly Hausdorff close to a geodesic between $\iota(a),
  \iota(b)$, whence the proof follows.
 Let $\gamma'$ be a unicorn path close to $\gamma$, in the sense
 above.  $\iota$ is
 Lipschitz, hence  $\iota \gamma, \iota \gamma'$ are close; since
$ \iota\gamma'$ is a unicorn path in $\A(\Sigma,\Gamma')$, choose
a geodesic $\gamma''$ close to $\iota \gamma'$.  By the Morse lemma,
there exists a
geodesic $\gamma'''$ between $\iota(a),\iota(b)$ that is close to
$\gamma''$, hence close to $\iota\gamma$.
\end{proof}

\noindent
Applying Theorem~\ref{thm:dtboundaries}, 
we obtain the desired embedding. 

\begin{corollary}\label{cor:iota_boundary}
  If $\Gamma$ is not bipartite and $\Gamma' \supset \Gamma$, then
  there exists an embedding $(\partial \iota)^{-1} : \partial
  \A(\Sigma,\Gamma') \to \partial \A(\Sigma,\Gamma)$. \qed
\end{corollary}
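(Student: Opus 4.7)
The plan is to apply Dowdall–Taylor (Theorem~\ref{thm:dtboundaries}) directly to the map $\iota$, using Lemma~\ref{lem:iota_ap} to supply the hypothesis on coarse alignment preservation. The main preliminary observation is that both $\A(\Sigma,\Gamma)$ and $\A(\Sigma,\Gamma')$ are geodesic $\delta$-hyperbolic spaces: since any odd cycle in $\Gamma$ survives in $\Gamma'$, if $\Gamma$ is not bipartite then neither is $\Gamma'$, so Theorem~\ref{thm:pac_hyp} applies to both prescribing relations (and in the small-complexity cases where bipartiteness is not the obstruction, hyperbolicity is automatic from non-triviality). We also note that $\iota$ is $3$-coarsely surjective by \cite[Lem.~2.10]{pac} (as already cited in the excerpt).

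Given these inputs, I would invoke Theorem~\ref{thm:dtboundaries} with $X = \A(\Sigma,\Gamma)$, $Y = \A(\Sigma,\Gamma')$, and $f = \iota$. The theorem produces a homeomorphism
\[
\partial \iota : \partial_{\A(\Sigma,\Gamma')} \A(\Sigma,\Gamma) \;\longrightarrow\; \partial \A(\Sigma,\Gamma').
\]
Since $\partial_{\A(\Sigma,\Gamma')} \A(\Sigma,\Gamma) \subset \partial \A(\Sigma,\Gamma)$ by definition, composing the set-theoretic inverse of this homeomorphism with the inclusion yields a topological embedding $\partial \A(\Sigma,\Gamma') \hookrightarrow \partial \A(\Sigma,\Gamma)$, which we abusively denote $(\partial \iota)^{-1}$.

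There is really no obstacle here: the statement is a purely formal consequence of the lemma, the coarse surjectivity already in hand, and the Dowdall–Taylor theorem quoted above. The only point demanding any care is verifying that the hypotheses of Theorem~\ref{thm:dtboundaries} are genuinely met for both $\iota$ and its target — namely the joint $\delta$-hyperbolicity of source and target — which is dispatched by the odd-cycle persistence remark together with Theorem~\ref{thm:pac_hyp}.
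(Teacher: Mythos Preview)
Your proposal is correct and matches the paper's approach exactly: the corollary is marked with a bare \qed{} immediately after Lemma~\ref{lem:iota_ap}, indicating that it follows formally from that lemma together with Theorem~\ref{thm:dtboundaries} and the coarse surjectivity of $\iota$, precisely as you have spelled out. The odd-cycle persistence observation you use to verify hyperbolicity of the target is already recorded at the end of the proof of Lemma~\ref{lem:iota_ap}.
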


By \cite[\S 5]{pac}, if $\Sigma \neq \Sigma_0^4$ and
$\A(\Gamma,\Sigma)$ is $\delta$-hyperbolic, then
\begin{enumerate*}[label=(\roman*)]
  \item $\Sigma = \Sigma_0^{n+1}$ and $\Gamma$ is an $n$-pointed
    star, 
  \item $\Sigma = \Sigma_1^2$ and $\Gamma$ is a
    non-loop edge, or
  \item $\Gamma$ is not bipartite.
\end{enumerate*}
In case (i), by \cite[Lem.~5.4]{pac} $\A(\Sigma,\Gamma)$ is
quasi-isometric to $\A(\Sigma,\ell_0)$, where $\ell_0$ is a single
loop and hence not bipartite.  Thus for cases (i) and (iii),
Corollary~\ref{cor:iota_boundary} implies
$\partial\A(\Sigma)
\subset \partial \A(\Sigma,\Gamma)$. 
In case (ii), every $\Gamma$-witness is in fact a
witness for the usual arc graph: by \cite{hhs_gen}
$\A(\Sigma,\Gamma)$ and $\A(\Sigma)$ have the same quasi-isometry
type, hence $\partial \A(\Sigma,\Gamma) \cong \partial
\A(\Sigma)$.

\begin{proposition}\label{prop:boundary}
  Let $\Sigma \neq \Sigma_0^4$ and $\A(\Sigma,\Gamma)$ be
  $\delta$-hyperbolic.  
  $\partial \A(\Sigma) \cong \el_0(\Sigma)$ embeds canonically into
  $\partial \A(\Sigma,\Gamma)$. \qed
\end{proposition}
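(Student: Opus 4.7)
The plan is to case-split on the classification from \cite[\S 5]{pac} of $\delta$-hyperbolic prescribed arc graphs with $\Sigma \neq \Sigma_0^4$, which yields three cases: (i) $\Sigma = \Sigma_0^{n+1}$ with $\Gamma$ an $n$-pointed star, (ii) $\Sigma = \Sigma_1^2$ with $\Gamma$ a non-loop edge, or (iii) $\Gamma$ is not bipartite. The homeomorphism $\partial \A(\Sigma) \cong \el_0(\Sigma)$ is exactly Theorem~\ref{thm:el0bdry}, so in each case I only need to produce a canonical embedding $\partial \A(\Sigma) \hookrightarrow \partial \A(\Sigma,\Gamma)$.

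First I would treat case (iii), which is the core of the argument and yields directly to Corollary~\ref{cor:iota_boundary}. Taking $\Gamma'$ to be the complete relation on the boundary components, so that $\A(\Sigma,\Gamma') = \A(\Sigma)$, the canonical inclusion $\iota : \A(\Sigma,\Gamma) \to \A(\Sigma)$ is coarsely surjective by \cite[Lem.~2.10]{pac} and, since $\Gamma$ is not bipartite, uniformly coarsely alignment-preserving by Lemma~\ref{lem:iota_ap}. The corollary then gives the required embedding $(\partial\iota)^{-1} : \partial \A(\Sigma) \hookrightarrow \partial \A(\Sigma,\Gamma)$.

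Case (i) I would reduce to (iii) using the quasi-isometry $\A(\Sigma,\Gamma) \simeq \A(\Sigma,\ell_0)$ from \cite[Lem.~5.4]{pac}, where $\ell_0$ is a single loop; since a single loop is not bipartite, (iii) produces an embedding into $\partial \A(\Sigma,\ell_0)$ which transports along the induced boundary homeomorphism. For case (ii), every $\Gamma$-witness is also a witness for $\A(\Sigma)$, so the hierarchically hyperbolic machinery of \cite{hhs_gen} upgrades the coarse surjection $\A(\Sigma,\Gamma) \to \A(\Sigma)$ to a quasi-isometry, and the induced boundary homeomorphism supplies the embedding.

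The main obstacle I anticipate is verifying that the resulting embedding merits the adjective ``canonical'': in cases (i) and (ii) it is produced via auxiliary quasi-isometries, and one must confirm independence from those choices. I expect this to follow from the naturality of the extension in Theorem~\ref{thm:dtboundaries} together with the fact that, up to bounded coarse error, each of the three embeddings is ultimately induced by the inclusion of $\Gamma$ into the complete relation on boundary components.
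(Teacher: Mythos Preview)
Your proposal is correct and mirrors the paper's own argument essentially step for step: the same three-case classification from \cite[\S 5]{pac}, Corollary~\ref{cor:iota_boundary} for case (iii), reduction via \cite[Lem.~5.4]{pac} to a single loop for case (i), and the quasi-isometry from \cite{hhs_gen} for case (ii). The paper does not separately address your concern about the word ``canonical,'' so your final paragraph is a reasonable remark but not something the paper itself resolves.
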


\subsection{A lower bound}\label{sec:lowerbound} 

From \cite{gabai}, we have the following:
\begin{theorem}[Gabai]\label{thm:gabai}
   Let $S$ be the $(n + 4)$-times punctured sphere for $n \geq 0$.  
   Then $\el(S)$ is
   homeomorphic to the $n$-dimensional N\"obeling space.
\end{theorem}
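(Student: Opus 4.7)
The plan is to apply the topological characterization of the $n$-dimensional N\"obeling space $N^n$ due to Ageev and Nag\'orko: a Polish space $X$ is homeomorphic to $N^n$ if and only if $\dim X = n$, $X$ is $C^{n-1}$ and $LC^{n-1}$, and $X$ enjoys an appropriate strong discrete approximation property by maps of compact $n$-dimensional polyhedra. I would verify each criterion for $\el(S)$ where $S = S_{0,n+4}$.

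First, $\el(S)$ with the coarse Hausdorff topology is Polish: the space of closed subsets of $S$ in the Chabauty topology is compact metrizable, and filling minimal laminations cut out a $G_\delta$ subspace. To pin down the dimension, I would use train track coordinates: every $L \in \el(S)$ is carried by a maximal recurrent train track $\tau$, and the laminations carried by $\tau$ embed as a subset of projective weight space, whose dimension for $S_{0,n+4}$ works out to $n$ after accounting for the passage from measured to unmeasured filling laminations. A countable cover of $\el(S)$ by such charts yields $\dim \el(S) \leq n$, while the lower bound follows by exhibiting an $n$-parameter family of uniquely ergodic laminations, for instance by iterating pseudo-Anosov maps supported on disjoint subsurfaces of a suitable decomposition.

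The main obstacle is the connectivity and strong approximation hypotheses, which form the heart of Gabai's contribution. His surgery technique replaces a compact almost-filling sublamination of a given ending lamination by a controlled modification, interpolating continuously between any two elements of $\el(S)$ and, more generally, extending any map $S^{k-1} \to \el(S)$ over $D^k$ for $k \leq n$; this yields the $C^{n-1}$ and $LC^{n-1}$ conditions. The strong discrete approximation property, namely that maps from $n$-dimensional compacta into $\el(S)$ may be perturbed to have pairwise disjoint images, is then verified by promoting this surgery argument to a parametrized family, so that generically chosen surgery data push maps off the lower-dimensional strata on which train track carriers fail to be unique. Controlling the surgeries uniformly across the parameter space is where Gabai's delicate geometric estimates do the most work.
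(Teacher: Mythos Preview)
The paper does not prove this theorem at all: it is stated as a result of Gabai, introduced with ``From \cite{gabai}, we have the following,'' and then used as a black box to obtain a compact $n$-dimensional subspace of $\partial\A(\Sigma,\Gamma)$. There is therefore no proof in the paper to compare your proposal against.

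Your outline is a reasonable high-level sketch of the strategy in Gabai's original work---identify $\el(S)$ as a Polish space, compute its covering dimension via train-track charts, and then verify the connectivity and approximation hypotheses in the Ageev--Nag\'orko characterization of $N^n$ via Gabai's almost-filling lamination surgery. But for the purposes of this paper, the correct ``proof'' is simply a citation. If you are writing a proof for the paper itself, you should replace your proposal with a reference to Gabai; if you are attempting to reprove Gabai's theorem independently, be aware that the surgery arguments you gesture at (especially the local connectivity and the discrete approximation property) are substantial and not something one can fill in from a paragraph-length sketch.
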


For any $\Sigma$ with $\chi(\Sigma) \leq -2$, let $n = n(\Sigma) =
-\chi(\Sigma) - 2$ and let $\Gamma$ be a prescribing relation such
that $\A(\Sigma,\Gamma)$ is $\delta$-hyperbolic.   
We may choose an essential $(n+4)$-punctured
sphere  $S$ that contains all of the punctures of $\Sigma$, 
thus $\el(S) \subset \el_0(\Sigma) \cong \partial \A(\Sigma)$.
Then applying Proposition~\ref{prop:boundary} and
Theorem~\ref{thm:gabai}, 
$\partial \A(\Sigma,\Gamma)$ contains the $n$-dimension
N\"obeling space, and in particular,  
a compact subspace $Z \subset
\el(S)$ of topological 
dimension $n$ by the universal embedding property of
N\"obeling spaces \cite{nobeling}.  

For the remainder of the section, we will prove the 
following generalization 
of a result for proper $\delta$-hyperbolic spaces
(\textit{e.g.}\ \cite[Prop.~6.2]{bleb}):

\begin{proposition}\label{prop:bdry_cpt}
  Let $X$ be a geodesic $\delta$-hyperbolic space with $Z \subset
  \partial X$ compact.  Then $\asdim X \geq \dim Z + 1$.
\end{proposition}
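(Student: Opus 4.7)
The plan is to quasi-isometrically embed a Bonk--Schramm hyperbolic cone over $Z$ into $X$, then invoke monotonicity of asymptotic dimension together with the known formula for the asymptotic dimension of a hyperbolic cone.

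Fix $x_0 \in X$. For $a > 0$ sufficiently small, $Z \subset \partial X$ admits a visual metric $d_a$ satisfying $d_a(z, z') \asymp e^{-a (z \mid z')_{x_0}}$; the construction requires only $\delta$-hyperbolicity of $X$, not properness. Form the Bonk--Schramm hyperbolic cone
\[
\Co(Z) = Z \times (0, 1], \qquad d_{\Co}\bigl((z, t), (z', t')\bigr) = 2 \log \!\left( \frac{d_a(z, z') + \max(t, t')}{\sqrt{t t'}} \right).
\]
Compactness of $Z$ ensures that $\Co(Z)$ is a proper geodesic $\delta'$-hyperbolic space with sequential boundary canonically identified with $Z$, and the Buyalo--Lebedeva formula gives $\asdim \Co(Z) \geq \dim Z + 1$.

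Next, for each $z \in Z$ apply Hasegawa's Remark to fix a $(1 + 4\delta, 12\delta)$-quasi-geodesic ray $\rho_z : [0, \infty) \to X$ with $\rho_z(0) = x_0$ and $[\rho_z(n)] = z$. Define $\Phi : \Co(Z) \to X$ by $\Phi(z, t) = \rho_z(-a^{-1} \log t)$, and write $T = -a^{-1} \log t$, $T' = -a^{-1} \log t'$. A direct computation gives
\[
d_{\Co}\bigl((z, t), (z', t')\bigr) \asymp a \bigl( T + T' - 2 \min\{ T, T', (z \mid z')_{x_0} \} \bigr),
\]
while the standard Gromov-hyperbolic estimate yields
\[
d_X\bigl(\rho_z(T), \rho_{z'}(T')\bigr) \asymp T + T' - 2 \min\{ T, T', (z \mid z')_{x_0} \}
\]
up to additive error depending only on $\delta$ and the Hasegawa constants. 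Thus $\Phi$ is a quasi-isometric embedding, and monotonicity of asymptotic dimension gives
\[
\asdim X \ \geq\ \asdim \Phi\bigl(\Co(Z)\bigr) \ =\ \asdim \Co(Z) \ \geq\ \dim Z + 1.
\]

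The main obstacle is verifying the second displayed estimate with uniform constants. In the proper case one has honest geodesic rays, and the thin-triangle inequality gives the comparison directly; here the Hasegawa rays are only quasi-geodesic, so the Gromov products $(\rho_z(T) \mid \rho_{z'}(T'))_{x_0}$ must be compared to $\min\{T, T', (z \mid z')_{x_0}\}$ via a Morse-stability argument for quasi-geodesics in a $\delta$-hyperbolic space, with the relevant constants controlled uniformly by $\delta$ and the $(1 + 4\delta, 12\delta)$ parameters. Compactness of $Z$ does not enter the embedding step, but is precisely what guarantees $\Co(Z)$ is a proper metric space, so that the Buyalo--Lebedeva lower bound applies.
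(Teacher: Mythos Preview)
Your proposal is correct and follows essentially the same route as the paper: build a hyperbolic cone over $Z$, embed it quasi-isometrically into $X$ via Hasegawa's uniform quasi-geodesic rays, and invoke the Buyalo--Lebedeva lower bound $\asdim \Co Z \geq \dim Z + 1$ together with monotonicity. The only cosmetic difference is that the paper uses Buyalo's $H^2$-triangle cone (Definition~\ref{def:cone}) rather than the Bonk--Schramm cone, and cites \cite[Lem.~5.1]{bonk} for the distance estimate you wrote out; the two cone models are quasi-isometric and the arguments are otherwise identical.
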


\noindent
Since $\partial \A(\Sigma,\Gamma)$ contains a
$n(\Sigma)$-dimensional compact subspace for $\chi(\Sigma) \leq -2$,
Theorem~\ref{thm:asdim} follows (vacuously for $\chi(\Sigma) > -2$).

By $\delta$-hyperbolic, we mean that geodesic triangles are
$\delta$-slim.
Let $X$ be a geodesic $\delta$-hyperbolic space and let $Z \subset
\partial X$ be compact. 
A metric $d : \partial X \times \partial X \to
[0,\infty)$ is \textit{visual} if there exist $k_1,k_2$ and $a > 0$
such that \[
  k_1 a^{-(\xi | \xi')} \leq d(\xi,\xi') \leq k_2 a^{-(\xi|\xi')}.
\]
Such metrics always exist \cite[Prop.\ III.H.3.21]{bh} and are
compatible with the usual topology on the (sequential) boundary:
$d(\xi_i,\xi) \to 0$ if and only if $(\xi_i | \xi) \to \infty$,
which is equivalent to $\xi_i \to \xi$. 

\begin{notation}
  Where unambiguous, we denote by $|xx'|$ the distance between $x,x'
  \in X$ a metric space.  Given a specified 
  basepoint $o \in X$, let $|x| \dfn |ox|$.
\end{notation}

\begin{definition}\label{def:cone}
  For $(Z,d)$ a bounded metric space, the \textit{hyperbolic cone} 
  $\Co Z$ 
  is the topological cone 
  $Z \times [0,\infty) / Z \times \{0\}$ endowed with
  the following metric. Let $\mu = \pi / \diam(Z)$.  
  For any $x = (z,t),x' = (z',t') \in \Co Z$, consider a geodesic
  triangle $\bar{o}\bar{x}\bar{x}' \subset H^2$ 
  with $|\bar o \bar x| = t, |\bar o \bar x'| =
  t'$, and $\angle_{\bar o}(\bar x,\bar x') =
  \mu|zz'|$.  Then let $|xx'| \dfn |\bar x\bar x'|$.
\end{definition}

This metric is compatible with the usual 
topology on $\Co Z$.  In addition, $\Co Z$ is $\delta$-hyperbolic,
$Z \hookrightarrow \partial \Co Z$ via the geodesic rays
$\gamma_z : t \mapsto (z,t)$, and $d$ is visual for 
$Z\subset \partial \Co Z$ with respect to $\Co Z$ 
\cite[Prop.~6.1]{buyalo}.   We fix $o = Z \times
\{0\}$ as a basepoint for $\Co Z$.
Analogously to \cite[Prop.~6.2]{buyalo}, we have the following:

\begin{lemma}\label{lem:cone_embed}
  Let $X$ be a geodesic $\delta$-hyperbolic space and let 
  $Z \subset \partial X$ be compact.  Then $\Co Z$
  quasi-isometrically embeds into $X$.
\end{lemma}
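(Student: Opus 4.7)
The plan is to construct an explicit quasi-isometric embedding $F : \Co Z \to X$ by sending each cone ray to a quasi-geodesic ray in $X$ with the same boundary endpoint, and to verify the QI inequality by reducing both distance functions to a common Gromov-product formula.  First, fix a basepoint $x_0 \in X$ and a visual metric on $\partial X$ based at $x_0$; restrict it to $Z$ to obtain a metric $d$, and use this $d$ to construct $\Co Z$.  Then $d$ is visual for $Z \subset \partial X$ with some parameter $a$, and by the Buyalo--Schroeder proposition cited after Definition~\ref{def:cone} it is also visual for $Z \subset \partial \Co Z$ with some parameter $b$.  Setting $\mu := \log b / \log a$, the two boundary Gromov products satisfy $(z|z')_{x_0} = \mu\, (z|z')_{o} + O(1)$.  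By Hasegawa's remark, for each $z \in Z$ I select a $(1+4\delta, 12\delta)$-quasi-geodesic ray $\rho_z : [0,\infty) \to X$ with $\rho_z(0) = x_0$ and $[\rho_z(n)] = z$, reparametrized by arclength, and define $F : \Co Z \to X$ by $F(z,t) = \rho_z(\mu t)$, so that $F$ sends the apex $o$ to $x_0$.

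The key computation uses the standard $\delta$-hyperbolic estimate that for any (quasi-)geodesic rays $\gamma, \gamma'$ from a basepoint $p$ to boundary points $\xi, \xi'$,
\[(\gamma(s)|\gamma'(s'))_p = \min\bigl(s,\, s',\, (\xi|\xi')_p\bigr) + O(1),\]
with error depending only on $\delta$ and the QI constants.  Applied to the honest geodesic rays $t \mapsto (z,t)$ in $\Co Z$ (with basepoint $o$) and to the quasi-geodesic rays $\rho_z$ in $X$ (with basepoint $x_0$), this gives closed-form approximations for $|(z,t)(z',t')|_{\Co Z}$ and $|F(z,t)\,F(z',t')|_X$ in terms of $t$, $t'$, and the respective Gromov products.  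Substituting the visual-parameter conversion $(z|z')_{x_0} = \mu (z|z')_o + O(1)$ then yields
\[|F(z,t)\,F(z',t')|_X = \mu \cdot |(z,t)\,(z',t')|_{\Co Z} + O(1),\]
which is the required QI embedding.

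The main obstacle is the scaling step: without the factor $\mu$ in the definition of $F$, the thresholds inside the $\min$-formulas in $\Co Z$ and in $X$ are mismatched by the visual-parameter ratio, and the difference $|F(x)F(x')|_X - |xx'|_{\Co Z}$ becomes unbounded along sequences with $t = t'$ fixed and $(z|z')_o$ growing.  Choosing $\mu = \log b / \log a$ is exactly what makes the thresholds compatible.  A secondary technical point is upgrading the Gromov-product $\min$-formula from honest geodesic rays (which can be used directly in $\Co Z$) to the quasi-geodesic rays $\rho_z$ in $X$, for which one invokes the Morse lemma together with the uniform quasi-geodesic constants.
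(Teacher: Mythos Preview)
Your proposal is correct and follows essentially the same approach as the paper: send each cone ray $t \mapsto (z,t)$ to a uniform-quality quasi-geodesic ray $\rho_z$ in $X$ representing $z$, and verify the QI inequality via the $\min$-formula $|\gamma(s)\gamma'(s')| = s + s' - 2\min\{s,s',(\xi|\xi')\} + O(1)$.  The only differences are cosmetic: where you introduce the explicit scale factor $\mu = \log b/\log a$ in the time parameter, the paper simply rescales the metric on $X$ so that $(z|z')_{x_0}$ and $(z|z')_o$ agree up to $O(1)$; and where you reparametrize $\rho_z$ by arclength, the paper keeps the original parametrization and carries the resulting factors $\kappa_z(t)\in[\kappa_0^{-1},\kappa_0]$ through the computation, absorbing them into the QI constants at the end.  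One small caution: the arclength reparametrization step presumes the $(1+4\delta,12\delta)$-quasi-geodesics from the cited remark are rectifiable, which is not automatic; you can sidestep this either by taming (connecting integer points by geodesics) or, as the paper does, by forgoing arclength and tracking the speed factors explicitly.
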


\begin{proof}
  Fix a basepoint $x_0 \in X$ and let $\delta' =
  \delta(\Co Z)$. 
  Since $d$ is visual for both $X$ and $\Co Z$, 
  up to rescaling $X$ we may assume that $(z|z')_{x_0}$ and
  $(z|z')_o$ are uniformly close for all $z, z' \in Z$.
  For each $z \in Z$, 
  fix a representative $(\kappa_0,12\delta)$-quasi-geodesic ray 
  $\rho_z \in z$ eminating from $x_0$ by Remark~\ref{rmk:qgr}, where
  $\kappa_0 = 1 + 4\delta$.
  Let $\iota : \Co Z \to X$ be the map $(z,t) \mapsto \rho_z(t)$.  
  
  Since $\gamma_z \in z$ is geodesic, 
  $(z|\gamma_z(t))_o > |\gamma_z(t)| -
  \delta'$
  and $|\gamma_z(t)| =
  t$.  Likewise,
  since $\rho_z \in z$ is $(\kappa_0,12\delta)$-quasi-geodesic, 
  $(z|\rho_z(t))_{x_0} > |\rho_z(t)|- M - \delta$, where $M =
  M(\kappa_0,12\delta)$ is the Morse constant, and 
  $|\rho_z(t)| = \kappa_z(t)t + O_{\delta}(1)$ with $\frac 1
  {\kappa_0} \leq \kappa_z(t) \leq \kappa_0$. 
  Let $y = (z,t), y' = (z',t') \in
  \Co Z$.  By \cite[Lem.~5.1]{bonk}, we have
  \begin{align*}
    |yy'| &= |\gamma_z(t)\gamma_{z'}(t')| \\
          &= |\gamma_z(t)| + |\gamma_{z'}(t')| - 2\min
  \{|\gamma_z(t)|,|\gamma_{z'}(t')|,(z|z')_{o}\}+ O_{\delta'}(1) \\
                          &= t + t' - 2\min \{t, t', (z|z')_{o}\}
                          + O_{\delta'}(1)
  \end{align*} 
  and similarly, 
  \begin{align*}
    |\iota(y)\iota(y')| &= |\rho_z(t)\rho_{z'}(t')| \\
                        &= \kappa_z(t)t + \kappa_{z'}(t')t' - 
                        2\min \{\kappa_z(t)t,
                        \kappa_{z'}(t')t', (z|z')_{x_0}\}
                        + O_\delta(1).
  \end{align*}
  $\iota$ is a quasi-isometric embedding. 
\end{proof}

Applying the argument in \cite[Prop.~6.5]{bleb}, we obtain
that $\asdim \Co Z \geq \dim Z + 1$.
Proposition~\ref{prop:bdry_cpt} then follows from
Lemma~\ref{lem:cone_embed}. \sqed

\section{Asymptotic dimension of $\G(\Omega)$}\label{sec:ga} 
We prove Theorem~\ref{thm:ga}.  
Let $\Omega$ be a surface of infinite topological type 
with finite grand splitting $\SC(\Omega)$.
\begin{definition}\label{def:fullsep}
  An essential, connected, compact subsurface
$\Sigma \subset \Omega$ is
\textit{fully separating} if 
every component of $\partial \Sigma$ is separating.
\end{definition}

\noindent
Any compact subsurface can be enlarged to one that is fully
separating: \textit{e.g.}\ we may glue $1$-handles between boundary
components adjacent to the same complementary component and take
the compact surface filled by the result.

\begin{lemma}\label{lem:embed}
  Suppose that $\Sigma \subset \Omega$ is a fully separating
  non-annular witness for $\mathcal{G}(\Omega)$ and 
  $|\mathcal{S}(\Omega)| = m$.  There exists a minimally 
  $m$-partite relation $\Gamma$ on $\pi_0(\partial
  \Sigma)$ such that $\A(\Sigma,\Gamma)$
  quasi-isometrically embeds into $\mathcal{G}(\Omega)$.
  In particular, $\Gamma$ 
  is not bipartite if $|\mathcal{S}(\Omega)| > 2$.
\end{lemma}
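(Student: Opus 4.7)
The plan is to build $\Gamma$ from the partition of $\pi_0(\partial \Sigma)$ by accumulated maximal-end class, construct a map $\phi : \A(\Sigma,\Gamma) \to \G(\Omega)$ by extending $\Gamma$-allowed arcs via fixed proper rays, and establish the quasi-isometric embedding with subsurface projection as coarse left-inverse.

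Since $\Sigma$ is fully separating, each $\beta \in \pi_0(\partial \Sigma)$ has a unique complementary component $\Omega_\beta$; the witness hypothesis precludes a grand arc lying entirely inside some $\Omega_\beta$, so each $\Omega_\beta$ accumulates at maximal ends of at most one class in $\M(\Omega)$. When this is realized I let $\tau(\beta) \in \M(\Omega)$ denote the class, and I set $\Gamma \dfn \{(\beta,\beta') : \tau(\beta), \tau(\beta') \in \M(\Omega) \text{ are distinct}\}$. The $\tau$-fibers, together with undefined boundaries lumped arbitrarily into existing fibers, partition $\pi_0(\partial \Sigma)$ into $m$ independent sets of $\Gamma$; since each $s \in \M(\Omega)$ is realized by some maximal end $e_s \in \Omega_{\beta_s}$, the components $\{\beta_s\}_{s \in \M(\Omega)}$ form an $m$-clique, so the chromatic number of $\Gamma$ equals $m$ and $\Gamma$ is non-bipartite when $m > 2$.

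To build $\phi$, I fix a simple proper ray $r_\beta \subset \Omega_\beta$ from $\beta$ to a maximal end of type $\tau(\beta)$ for each $\beta$ with $\tau(\beta)$ realized, and for a $\Gamma$-allowed arc $a$ between boundaries $\beta_1, \beta_2$ I set $\phi(a) \dfn r_{\beta_1} \cup a \cup r_{\beta_2}$: a grand arc because $\tau(\beta_1) \neq \tau(\beta_2)$. If $a, a' \in \A(\Sigma, \Gamma)$ are disjoint, $\phi(a)$ and $\phi(a')$ fail to be disjoint only through rays shared at common endpoint boundaries; each such $\Omega_\beta$ is infinite-type and supports infinitely many pairwise-disjoint simple proper rays to its maximal ends of type $\tau(\beta)$, so I can swap in alternate rays to produce an intermediate grand arc disjoint from both $\phi(a), \phi(a')$. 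In the degenerate subcase where $a, a'$ share both endpoint boundaries, I additionally replace the arc portion by a common neighbor in $\A(\Sigma, \Gamma)$, which exists since $\A(\Sigma,\Gamma)$ is non-trivial. This uniformly bounds the distance between $\phi(a)$ and $\phi(a')$ in $\G(\Omega)$.

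For the lower bound, I define $\pi : \G(\Omega) \to \A(\Sigma,\Gamma)$ by letting $\pi(\alpha)$ be the set of $\Gamma$-allowed essential components of $\alpha \cap \Sigma$, once $\alpha$ is placed in minimal position with respect to $\Sigma$. As $\alpha$ traverses complementary components, the $\tau$-types of the sides it visits must transition between the two distinct classes at its infinite ends, so at least one essential crossing subarc is $\Gamma$-allowed and $\pi(\alpha)$ is non-empty; disjoint grand arcs yield disjoint $\pi$-images, which span a bounded-diameter subset of $\A(\Sigma, \Gamma)$. Hence $\pi$ is coarsely Lipschitz, and since $\pi(\phi(a)) = \{a\}$ it is a coarse left-inverse to $\phi$, completing the quasi-isometric embedding. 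I expect the main obstacle to lie in the Lipschitz step, specifically the uniform production of pairwise-disjoint rays in each $\Omega_\beta$ and the common-neighbor bridging in the degenerate endpoint-coincidence case, though both should follow from the infinite-type structure of the $\Omega_\beta$ together with non-triviality of $\A(\Sigma, \Gamma)$.
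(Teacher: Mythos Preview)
Your overall strategy matches the paper's: define $\Gamma$ as the complete $m$-partite relation on boundary components colored by the maximal-end class of the adjacent complementary region, build a forward map by extending $\Gamma$-allowed arcs along fixed rays, and exhibit a coarse Lipschitz retraction $\pi$ from $\G(\Omega)$ back to $\A(\Sigma,\Gamma)$.

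There is, however, a genuine gap in your definition of $\pi$. You set $\pi(\alpha)$ to be the $\Gamma$-allowed essential components of $\alpha \cap \Sigma$ and argue non-emptiness by a transition argument: the $\tau$-types seen along $\alpha$ must change from one maximal class to another, so some component connects differently colored boundaries. But some boundary components $\beta$ may have $\tau(\beta)$ undefined (when $\Omega_\beta$ contains no maximal end, e.g.\ a handle or a region with only non-maximal ends), and you have explicitly left such $\beta$ isolated in $\Gamma$. A grand arc can route through such a component: concretely, it may cross $\Sigma$ in two pieces $(\beta_0,\beta_1)$ and $(\beta_1,\beta_2)$ with $\tau(\beta_0),\tau(\beta_2)$ distinct but $\tau(\beta_1)$ undefined, in which case neither piece is $\Gamma$-allowed and $\pi(\alpha) = \varnothing$. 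Lumping the uncolored boundaries into existing fibers does not help, since they remain isolated in $\Gamma$ itself.

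The paper avoids this by defining $\pi$ differently: take the first and last intersections $\omega^\pm$ of $\omega$ with $\Sigma$ (which necessarily lie on boundaries of distinct colors), and let $\hat\omega$ be the shortest path between them in $(\omega \cap \Sigma) \cup \partial\Sigma$, surgering through boundary arcs to produce a single $\Gamma$-allowed arc. This is always defined, and the Lipschitz bound then comes from an intersection-number estimate rather than from disjointness of components. Incidentally, the paper's forward map also routes along boundary subpaths before joining the fixed rays, which makes it directly simplicial and obviates your intermediate-arc argument; your version can be made to work, but the paper's is cleaner on both ends.
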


\begin{proof}
  Since $\Sigma$ is a witness for $\G(\Omega)$, it must separate 
  distinct sets in $\SC(\Omega)$.  In particular, each
  boundary component is adjacent to a complementary component
  containing ends in at most one set in $\SC(\Omega)$.  
  Color each component $c \in \pi_0(\partial \Sigma)$ with the 
  corresponding set $e(c) \in \SC(\Omega)$, if
  one exists; let $\Gamma$ be the complete $m$-partite relation on
  these colors (components without a corresponding 
  class are left isolated).

  Fix a hyperbolic metric on $\Sigma$.  For each colored 
  boundary component $c$, choose a parameterization $c : [0,1) \to
  \Sigma$ and a simple ray $\rho_c$ disjoint from $\inter(\Sigma)$ 
  with origin $c(0)$ and converging to an end in $e(c)$.  
  Let $a \in \A(\Sigma,\Gamma)$ be an arc that terminates on
  $c_1, c_2 \in \pi_0(\partial
  \Sigma)$.  Let $\alpha$ be the geodesic representative for $a$
  with endpoints $c_i(t_i)$ and define $\delta_i = c_i|_{[0,t_i]}$
  to be the subpath of $c_i$ between $c_i(0)$ and $c_i(t_i)$.
  Let $\alpha^\dag$ denote the extension of $\alpha$ from both
  endpoints by $\bar \delta_i * \rho_{c_i}$, 
  for $i = 1,2$ as appropriate. $\alpha^\dag$ 
  is a simple arc converging
  to ends in $e(c_1),e(c_2)$ respectively, which are 
  distinct in $\SC(\Omega)$ by our choice of $\Gamma$.
  $\alpha^\dag$ is a grand arc.  The map $a \mapsto [\alpha^\dag]$
  preserves disjointness hence 
  extends to a simplicial ($1$-Lipschitz) map $\psi : 
  \A(\Sigma,\Gamma) \to \G(\Omega)$. 

  We show that $\psi$ is a quasi-isometric embedding by constructing
  a coarse Lipschitz retraction $\pi : 
  \G(\Omega) \to \A(\Sigma,\Gamma)$.
  For a grand arc $w \in \G(\Omega)$, fix a representative $\omega$ 
  that is geodesic in $\Sigma$. Let $\omega^\pm$ denote the first
  and last intersections of $\omega$ with $\Sigma$ and let 
  $\hat \omega$ denote the shortest path between $\omega^-$ and
  $\omega^+$ in $(\omega \cap \Sigma) \cup \partial \Sigma$.
  Since $\omega$ converges to maximal ends distinguished by
  $\SC(\Omega)$,
  $\omega^\pm$ lie on boundary components with distinct colors:
  isotoping $\hat \omega$ into the interior of $\Sigma$ rel
  $\omega^\pm$, $\hat \omega$ is $\Gamma$-allowed and we define $\pi
  : w \mapsto [\hat \omega]$. From the constructions of $\psi, \pi$,
  it is immediate that $\pi\psi$ is identity on $\A(\Sigma,\Gamma)$.
  We verify that $\pi$ is Lipschitz.  Let $w,
  w' \in \G(\Omega)$ be disjoint grand arcs and let $\pi(w) = [\hat
  \omega]$ and $\pi(w') = [\hat\omega']$ as above.  
  Since $\hat \omega$ is constructed as a shortest
  path,  it contains at most $|\pi_0(\partial \Sigma)| -
1$ segments that are components of $\omega \cap \Sigma$. Each of
these segments intersects $\hat \omega'$ at most twice and  in
subsegments of $\hat \omega'$ parallel to $\partial \Sigma$, and the
same statement holds exchanging $\hat\omega$ and $\hat\omega'$.
Thus $i(\hat \omega,\hat\omega') \leq 4|\pi_0(\partial \Sigma)| -
4$.  Finally, since $d([\hat\omega],[\hat\omega']) \leq
i(\hat\omega,\hat\omega') + 1$ by \cite[Prop.~2.6]{pac}, 
we obtain that $\pi$ is $(4|\pi_0(\partial \Sigma)| - 3)$-Lipschitz.
\end{proof}  

Witnesses for $\G(\Sigma)$ exist 
\cite[Lem.~2.7]{grand} 
and their enlargements are likewise witnesses, hence 
there exist fully separating witnesses
$\Sigma \subset \Omega$ of arbitrarily large complexity.
If $|\SC(\Omega)| > 2$ and $\Gamma$ is chosen as in
Lemma~\ref{lem:embed}, 
then $\A(\Sigma,\Gamma)$ is $\delta$-hyperbolic by
Theorem~\ref{thm:pac_hyp} and by Lemma~\ref{lem:embed} and
Theorem~\ref{thm:asdim} $\asdim \mathcal{G}(\Omega) > n$ for all
$n$. 

Suppose instead that $|\SC(\Omega)| = 2$.  If $\Omega$ has infinite
genus or infinitely many non-maximal ends, then 
there exists an infinite collection  of pairwise-disjoint annular
witnesses  separating the sets 
$\{e,f\} = \SC(\Omega)$.
Choosing finite subcollections defines quasi-flats of arbitrarily
large dimension \cite[Exercise 3.13]{curvenotes}, 
hence again $\asdim \G(\Omega) 
= \infty$.  Alternatively, $\G(\Omega)$
contains an asymphoric
hierarhically hyperbolic space of arbitrarily high rank,
hence has infinite asymptotic dimension \cite[Prop.~1.11]{hhs_gen}.  

Finally, suppose that $|\SC(\Omega)| = 2$ and $\Omega$ has finite
genus and finitely many non-maximal ends.  $\Omega$ must have at
least one infinite set $e \in \SC(\Omega)$; 
let $f \in \SC(\Omega)$ be the other set.  For any $n$, choose a
$(n+1)$-holed sphere $\Sigma  \subset \Omega$ 
with $n$ boundary components partitioning $e$ and the remaining
component separating $e$ from $f$ and any genus or non-maximal ends.
Then $\Sigma$ is a fully separating
witness for $\G(\Omega)$ and $\Gamma$, defined
as in Lemma~\ref{lem:embed}, is a $n$-pointed star.
$\A(\Sigma,\Gamma)$ is $\delta$-hyperbolic by
Theorem~\ref{thm:pac_hyp}: we conclude by Lemma~\ref{lem:embed} and
Theorem~\ref{thm:asdim}. \sqed

\section{Asymptotic dimension of arc and curve
models}\label{sec:cm} 

We generalize the preceding arguments to a broad class of
arc and curve models for finite and infinite-type surfaces.  We
first consider \textit{cocompact} arc and curve models, which we
demonstrate to be equivariantly quasi-isometric to cocompact marking
graphs in the finite-type case; it follows that such arc and
curve models are hierarchically hyperbolic by
\cite{hhs_gen}.  
We then compute asymptotic dimension lower bounds for such models,
which we use to construct lower bounds in the infinite-type
setting.

\subsection{Cocompact arc and curve models}  Let $\Omega$ be a
connected and non-pants hyperbolic 
surface of finite or infinite type.  
We first provide an
extension of arc and curve systems and markings on $\Omega$ 
that subsumes both. Let 
$\mathcal{K}(\Omega) \dfn K(V(\mathcal{AC}(\Omega)))$ denote the set of finite collections of
(not necessarily disjoint) simple arcs and curves on $\Omega$ and
let $\PMap_c(\Omega) \leq \Map(\Omega)$ denote the subgroup of compactly supported
pure mapping classes.  

\begin{definition}\label{def:ccpt}
  An \textit{arc and curve model}  $\M$ on $\Omega$ is a connected 
  simplicial graph with $V(\M) \subset \mathcal{K}(\Omega)$ 
  that admits an action of $\PMap_c(\Omega)$ induced by the
  permutation of its vertices.  $\M$ is \textit{cocompact} if 
  this action is cocompact.
\end{definition}

\begin{remark*}\label{rmk:ccptint}
If $\Omega$ is finite-type, then 
\begin{enumerate*}[label=\textit{(\roman*)}]
  \item $\PMap_c(\Omega) = \PMap(\Omega)$ and
  \item
  $\M$ is cocompact if and only if $i(u,u)$ and $i(u,v)$ are
  uniformly bounded for $u \in V(\M)$ and $(u,v) \in E(\M)$.
\end{enumerate*}
\end{remark*}

Cocompact arc and curve models include most familiar graphs on
finite-type surfaces, such as the arc and curve graph
$\mathcal{AC}(\Sigma)$, connected subgraphs preserved by
$\PMap(\Sigma)$ (\textit{e.g.}\ the
curve graph and arc graph), and the pants graph.  Masur and Minsky's
marking complex $\mathcal{MC}(\Sigma)$ is likewise included, as we will show below.

\subsubsection{Cocompact marking graphs}\label{sec:marking}
A \textit{marking graph} $\mathcal{L}$ on a
compact surface $\Sigma$ is a connected simplicial graph whose
vertices are locally clean markings on $\Sigma$ \cite{hhs_gen}.
As above, $\mathcal{L}$ is \textit{cocompact} if $\PMap(\Sigma)$ acts on
$\mathcal{L}$ cocompactly by permuting its vertices.  We prove that
cocompact arc and curve models and cocompact marking graphs are
identical, up to $\PMap(\Sigma)$-equivariant quasi-isometry.

For an essential subsurface $W \subset \Sigma$ and $u$ a collection
of simple arcs and curves, let $\pi_W(u) \subset \cc W$ denote union
of the (set-valued) projections of the elements in $u$, and let $d_W(u,v)
\dfn \diam( \pi_W(u) \cup \pi_W(v))$.
We show:

\begin{proposition}\label{prop:ggm_to_mg}
  Let $\M$ be a cocompact arc and curve model on a compact surface $\Sigma$.
  Then there exists a cocompact marking graph $\LL_{\M}$ on $\Sigma$
  with an identical witness set and
  an equivariant coarse quasi-isometry $\zeta : \M \to \LL_{\M}$ that
  coarsely preserves projections to witness curve graphs.
\end{proposition}

\begin{lemma}\label{lem:choi-rafi}
  There is a uniform increasing function $\phi : \R_+ \to \R_+$
  depending only on $\Sigma$ such that for any $a,b \in
  \mathcal{AC}(\Sigma)$ and any essential subsurface $W \subset
  \Sigma$, $\phi(i(a,b)) \geq d_W(a,b)$.
\end{lemma}

\begin{proof}
  When $a,b$ are both curves, the claim follows from the Choi-Rafi
  formula for curves \cite[Thm.~2.10]{wata}.  In general, there exists some
  increasing (affine) function $\phi' : \R_+ \to \R_+$ so that
  $\phi'(i(a,b)) >
  i(a',b')$ for every $a' \in \pi_\Sigma(a), b' \in
  \pi_\Sigma(b)$.  We conclude by observing that $\pi_W \circ \pi_\Sigma$ and  
  $\pi_W$ are uniformly boundedly close.
\end{proof}

\begin{remark*}\label{rmk:proj} 
  It follows from Remark~\ref{rmk:ccptint} and Lemma~\ref{lem:choi-rafi}
that for any cocompact arc and curve model $\M$, 
witness subsurface projections $\pi_W : V(\M) \to 2^{\cc W}$ 
are uniformly Lipschitz with uniformly bounded
vertex images; in general, any subsurface projection has uniformly bounded
diameter vertex images.
\end{remark*}

\begin{definition}\label{def:marking}
  A \textit{generalized marking} $\mu = \{(a_i,t_i)\}$ on a surface $\Sigma$ is a
multicurve $\base \mu = \{a_i\}$ along with
bounded subsets $t_i \subset \cc (a_i)$ for each $a_i$.  
Let $\mu' \subset \mu$ be the
maximal submarking with only non-empty transversals.  If $\base
\mu'$ is a pants decomposition on $\Sigma \setminus (\mu \setminus
\mu')$, then $\mu$ is \textit{locally complete}.
\end{definition}

\begin{definition}\label{def:compatible}
  A locally clean marking $\mu' = \{(a_i',t_i')\}$ is \textit{compatible} with a
  generalized marking $\mu = \{(a_i,t_i)\}$ if $a_i' = a_i$ (up to
  reindexing), $t_i = \varnothing$ only if $t_i' = \varnothing$, 
   and $\diam(t_i',t_i)$
  is minimal among all choices of clean $t_i'$.
\end{definition}

\noindent
Only a locally complete generalized marking admits compatible locally
clean markings.
Exactly analogously to \cite[Lem.~2.8]{MMII}, we have the following
for generalized markings:

\begin{lemma}\label{lem:compatible}
For any locally complete generalized marking $\mu = \{(a_i,t_i)\}$,
there is a constant $n_0$ depending only on $\max_i (\diam(t_i))$ 
and at least
one and at most $n_0^{|\mu|}$ compatible locally clean markings.
For any compatible marking $\mu' = \{(a_i,t_i')\}$,
$d_{a_i}(t_i,t_i') < n_1$ for a universal constant $n_1$.
\end{lemma}

 For each $u \in V(\M)$, we first equivariantly construct
  a corresponding locally clean marking $\mu_u$. 
  Choose representatives for $u = \{a_i\}$ in
  pair-wise minimal position; let $\Gamma_u = \bigcup_i a_i$, viewed
  as a $1$-complex (add basepoints to isolated simple curves as necessary).
For a
subcomplex $\Gamma' \subset \Gamma_u$, let $\partial_\Sigma(\Gamma')$
denote the set of essential, non-peripheral boundary components of a regular neighborhood
of $\Gamma' \cup \partial \Sigma$, or equivalently
the set of non-peripheral
boundary components of the essential subsurface filled by
$\Gamma'$.  Fix an enumeration of the edges
in $\Gamma_u$ and let $\Gamma_{u,j} \subset \Gamma_u$
be an exhaustion of $\Gamma_u$ by adding the $j$th successive edge;
let $F_u$ denote the essential subsurface filled
by $u$ and 
$F_{u,j}$ the essential subsurface filled by $\Gamma_{u,j}$.
Let $m_u$ be the multicurve $\bigcup_j
\partial_\Sigma(\Gamma_{u,j})$.  Obtain the generalized
marking $\tilde \mu_u$ by adding to each component $c_i \in m_u$ the
transversal $\pi_{c_i}(u)$.

\begin{claim}\label{claim:muu} 
$\tilde \mu_u$ satisfies the following:  
\begin{enumerate}[label=(\roman*)]
    \item \label{item:muu:complete} The submarking
      $\tilde \mu_u \setminus \partial F_u$ is contained in $F_u$ and
      complete on $F_u$.
    \item \label{item:muu:trans} $\tilde \mu_u \cap \partial F_u$ has empty transversals, and for
      all other $(c_i,t_i) \in \tilde \mu_u$, 
      $d_{c_i}(t_i, \pi_{c_i}(u))$
      is uniformly bounded independently of $u$.
    \item \label{item:muu:int} For $c_i \in \base \tilde \mu_u$, $i(c_i,u)$ is uniformly
      bounded independently of $u$.
  \end{enumerate}
  In particular, $\tilde \mu_u$ is locally complete and any
  compatible locally clean marking satisfies
  \ref{item:muu:complete}-\ref{item:muu:int}.
\end{claim} 

\begin{proof}[Proof of claim]
  Let $e_j = \overline{\Gamma_{u,j}\setminus \Gamma_{u,j-1}}$ 
  be the $j$th edge
in $\Gamma_u$ and let $\Gamma_{u,0} = \varnothing$.  Let $m_{u,k} =
\bigcup_{j=1}^k\partial_\Sigma(\Gamma_{u,j})$
and $m_{u,0} = \varnothing = F_{u,0}$.
Inducting on $k \leq |E(\Gamma_u)|$, we first claim that $m_{u,k}
\setminus \partial F_{u,k}$ is a pants
decomposition for $F_{u,k}$.
In particular, $m_u \setminus \partial F_u$ is a pants decomposition
of $F_u$ and \ref{item:muu:complete} follows: since $u$ fills $F_u$, it
intersects every component $c_i$ of $m_u \setminus \partial F_u$, which
then has transversal $\pi_{c_i}(u) \neq \varnothing$ in $\tilde
\mu_u$.

By construction $m_{u,k}\setminus \partial F_{u,k}$ is
contained in $F_{u,k}$.
The claim holds vacuously for $k = 0$; suppose it holds for
$k-1\geq 0$ and consider the $k$th edge $e_k$.  Recall that
$\partial_\Sigma(\Gamma_{u,j})$ is the set of non-peripheral
boundary curves in $F_{u,j}$, hence if $F_{u,k}$ and $F_{u,k-1}$ are
isotopic then
$m_{u,k} = m_{u,k-1}$ and the claim holds by induction.  Assume that
$F_{u,k} \supsetneq F_{u,k-1}$.
If $e_k$ is disjoint from
$F_{u,k-1}$, then $F_{u,k}$ is obtained from $F_{u,k-1}$ by adding
either a disjoint annulus or a disjoint pair of pants; in either
case, the claim holds.  Else, $F_{u,k-1}$ is obtained by adding a
$1$-handle with core $e_k$, hence by adjoining a pair of pants along
a (non-peripheral) boundary curve $c \in \partial
F_{u,k-1} \cap m_{u,k-1}$: $m_{u,k} \setminus \partial F_{u,k} = (m_{u,k-1}
\setminus \partial F_{u,k-1}) \cup \{c\}$ is a pants decomposition
for $F_{u,k}$. 

To prove \ref{item:muu:trans}, observe that  
by construction $\partial F_u$ does not intersect $u$, hence $\mu_u
\cap \partial F_u$ has empty transversals; the remaining
transversals are uniformly bounded by Remark~\ref{rmk:proj}.
Finally, every
component $c \in m_u$ is a boundary curve for some $F_{u,k}$, which
is filled by $\Gamma_{u,k} \subset \Gamma_u$: any essential
intersection between $c$ and $u$ implies an essential 
self-intersection, whence \ref{item:muu:int} follows.\!\! \end{proof}

\noindent
Let $\mu_u$ be a choice of locally clean marking compatible with
$\tilde \mu_u$.  While $\mu_u$ and $\tilde \mu_u$ are non-canonical, we
enforce that the choices are $\PMap(\Sigma)$-equivariant:
construct $\mu_u$ for representatives $u \in V(\M)/\PMap(\Sigma)$
and declare $\mu_{\varphi u} \dfn \varphi \mu_u$ for $\varphi \in
\PMap(\Sigma)$.

\begin{proof}[Proof of Prop.~\ref{prop:ggm_to_mg}]
    Let $V(\LL_\M) = \{\mu_u : u \in V(\M)\}$ and obtain $E(\LL_\M)$ by pushing
  forward the edge relation on $\M$ via $u \mapsto \mu_u$; 
  let
  $\zeta : \M \to \LL_\M$ be the $\PMap(\Sigma)$-equivariant 
  surjective Lipschitz map induced by $u \mapsto \mu_u$. Since $\M$
  is cocompact, $\zeta$ implies that $\LL_\M$ is cocompact.

  Let $W
\subset \Sigma$ be an essential, non-pants subsurface.  
  Recall that $u$ is filling on 
  $F_u$,  hence
  Claim~\ref*{claim:muu}\ref{item:muu:complete}-\ref{item:muu:trans} implies
  $W$ intersects $\mu_u$ if and only if it intersects $F_u$ if and
  only if it intersects $u$: the witness set
  for $\LL_\M$ is identical to that of $\M$.
  Claim~\ref{claim:muu}\ref{item:muu:trans} implies that the
  projection of $u$ and $\mu_u$ to annular witnesses parallel to
  $\mu_u$ is uniformly close; similarly, by
  Claim~\ref*{claim:muu}\ref{item:muu:int} $i(u,\base \mu_u)$ is
  uniformly bounded, hence Lemma~\ref{lem:choi-rafi} implies that the
  projection of $u, \mu_u$ to all other witnesses is likewise
  uniformly close.

  It remains to show that 
  $\zeta$ is a coarse Lipschitz retraction,
  hence a quasi-isometry.  Since $\LL_\M$ has finitely many
  $\PMap(\Sigma)$-orbits of edges, it suffices to show that the
  fibers of $\zeta$ are uniformly bounded. The following lemma
  completes the proof.
\end{proof}

\begin{lemma}\label{lem:counting_fiber}
  The fibers $E_\mu = \zeta^{-1}(\mu)$ are uniformly bounded over $\mu \in
V(\LL_\M)$.
\end{lemma}

\begin{proof}
  Let $u, v \in E_\mu$, hence $F_u = F_v$ and $\mu \setminus
  \partial F_u$  is complete on $F_u$. It suffices to show that $i(u,v)$ is
  uniformly bounded.  Let $u' \dfn u \setminus \partial F_u,v' \dfn
  v \setminus \partial F_u$ denote the subsets of
  non-boundary elements in $u,v$ respectively.  By
  Claim~\ref*{claim:muu}\ref{item:muu:int} $u', v'$
  intersect each component of $\base \mu$ uniformly many times,
  independently of $u,v$ and $\mu$, 
  hence $u',v'$
  have at most uniformly many components in each pair of pants $P\subset F_u \setminus
  \mu$.  Up to Dehn twists along components in $\base \mu$, these
  components are among finitely many arcs and curves in $P$; by
  Claim~\ref*{claim:muu}\ref{item:muu:trans}
  projections to curves in $\base \mu$ are uniformly
  close to the corresponding transversal in $\mu$ 
  (independently of $u,v,\mu$), hence the order of these twists (and
  hence the intersection number)
  for any two components in $P$ is uniformly bounded.  It follows
  that $i(u,v)$ is uniformly bounded independently of $u,v,\mu$.
\end{proof}

\begin{proposition}\label{prop:biject}
  The map $\M \mapsto \LL_\M$ induces a bijection between the
  $\PMap(\Sigma)$-equivariant quasi-isometry types of cocompact arc
  and curve models
  on $\Sigma$ and of cocompact marking graphs on $\Sigma$.
\end{proposition}

\begin{proof}
  Since $\M$ and $\LL_\M$ are equivariantly quasi-isometric, the
  induced map is well-defined and injective.  We show surjectivity.
Given a locally clean marking $\mu = \{(a_i,t_i)\}$, where $t_i =
\varnothing$ or $\pi_{a_i}(b_i)$ for some (unique) clean transverse curve
$b_i$, let $u_\mu = \{a_i\}\cup \{b_i\}$. 
  Let $\LL$ be a cocompact marking graph on $\Sigma$, and let $\M_\LL$
  be the graph obtained as the (connected) push-forward of $\LL$ by the map $\psi
  : \mu
  \mapsto u_\mu$.  $\psi$ is $\PMap(\Sigma)$-equivariant, hence
  $\M_\LL$ is cocompact.
  As above, $\psi : \LL \to \M_\LL$ is a surjective Lipschitz
  retraction, hence a quasi-isometry: any two markings in the fiber
  $E_u = \{\mu : u = u_\mu\}$ differ by at most uniformly many flip moves,
  each of which increases the intersection number by at most $2$.
  Hence $\LL_{\M_\LL}$ is equivariantly quasi-isometric to $\LL$,
  which suffices.
\end{proof}

\subsubsection{Geometry in the finite-type case}\label{sec:hhs}

By \cite[Thm.~2.10]{hhs_gen}, 
cocompact marking graphs on
finite-type surfaces are hierarchically hyperbolic with respect to
witness subsurface projection, and this
geometry is determined up to equivariant quasi-isometry by the
set of connected witnesses \cite[Thm.~2.12]{hhs_gen}.
Let $\X^\M$ and $\hat \X^\M$ denote the sets of witnesses and connected
witnesses of an arc and curve model $\M$, respectively.  
Applying Proposition~\ref{prop:ggm_to_mg}, we obtain: 
\begin{theorem}\label{thm:acm_hhs}
  Let $\M$ be a cocompact arc and curve model on a finite-type
  hyperbolic, non-pants surface $\Sigma$.
  Then $(\M,\X^\M)$ is
  an asymphoric hierarchically hyperbolic space with respect to
  subsurface projection to witness curve graphs $\pi_W : \M \to
  2^{\cc W}, W \in \X^\M$.
\end{theorem}

A \textit{(connected) witness set} on $\Sigma$ is any collection of
essential
compact (connected) subsurfaces without pants components 
that is closed under enlargement and the action of
$\PMap(\Sigma)$.  From \cite[Thm.~2.12]{hhs_gen} and 
Propositions~\ref{prop:ggm_to_mg} and
\ref{prop:biject} we have:

\begin{theorem}\label{thm:witness_char}
 The map $\M \mapsto \hat \X^\M$ induces a bijection between
 coarsely $\PMap(\Sigma)$-equivariant quasi-isometry types of cocompact arc
 and curve models on $\Sigma$ and connected witness sets on
 $\Sigma$.
\end{theorem}

\subsection{Witness-cocompactness}  In the infinite-type setting, we
require a related ``local'' condition, determined by the existence of
projections to cocompact arc and curve models on each witness
subsurface.  Crucially, these projections admit Lipschitz sections,
whence we will obtain asymptotic dimension lower bounds in
Section~\ref{sec:asdim_bounds}.
\subsubsection{Witness projections}  

Let $\Sigma \subset \Omega$ be a compact, essential, 
and connected
subsurface.  Let $\K(\Omega,\Sigma) \subset
\K(\Omega)$ denote the subset of collections of 
arcs and curves 
intersecting $\Sigma$.  We construct a 
projection $\rho_\Sigma : \K(\Omega,\Sigma) 
\to \K(\Sigma)$ as follows (see \textit{e.g.}\
\cite[\S5.2]{curvenotes}). 
Let $\iota : \Sigma \hookrightarrow \Omega$ be the inclusion map 
and let $p : \Omega_\Sigma \to \Omega$ be the 
covering space associated to $\pi_1(\Sigma) \cong \img \iota_* <
\pi_1(\Omega)$ with Gromov closure $\overline\Omega_\Sigma$. 
Let $\tilde \iota : \Sigma \hookrightarrow
\Omega_\Sigma$ 
be the (unique) lift of $\iota$, and  $\bar \iota$ its inclusion 
into $\overline \Omega_\Sigma$.  Fix any homeomorphism $\sigma :
\overline \Omega_\Sigma \to \Sigma$ that is a homotopy inverse for
$\bar \iota$; 
note that $\sigma$ is unique up to homotopy, hence isotopy.  
\[
\begin{tikzcd}
  & \Omega_\Sigma \rar[hook] \ar[d,"p"]  & {\overline \Omega}_\Sigma
  \ar[dll, bend right=50,"\sigma"'] \\ 
  \Sigma \rar[hook,"\iota"'] \ar[ur,hook,"\tilde \iota"] & \Omega 
                                                         &
\end{tikzcd}
\]
Given
$\omega \in \K(\Omega,\Sigma)$, let
$\rho_\Sigma(\omega)$ be the isotopy class defined by the
closures of 
non-peripheral components of $\sigma p^{-1}(a)$ for all $a \in \omega$.  

One verifies that $\rho_\Sigma(\omega)$ is independent of the 
choice of representative for $\omega$ and $\sigma$.
  Likewise, $\rho_\Sigma$ is independent of the choice of embedding
  of $\Sigma$: if $\iota' : \Sigma \hookrightarrow
\Omega$ is
isotopic to $\iota$, then the lift $\bar\iota\,'$ is
isotopic to $\bar \iota$ and thus a homotopy inverse for $\sigma$. 

The natural action of $\PMap(\Sigma)$ on $\K(\Sigma)$
defines an action of $\Map(\Sigma,\partial \Sigma)
\twoheadrightarrow \PMap(\Sigma)$.  Similarly, 
$\Map(\Sigma,\partial \Sigma) \curvearrowright
\K(\Omega,\Sigma)$ via the homomorphism
$\Map(\Sigma,\partial \Sigma) \to \PMap_c(\Omega)$ obtained by
extending by identity.

\begin{lemma}\label{lem:projequi}
 $\rho_\Sigma: \K(\Omega,\Sigma) \to \K(\Sigma)$
is $\Map(\Sigma,\partial \Sigma)$-equivariant.  
\end{lemma}

\begin{proof} Let
$\varphi_0 \in \Map(\Sigma,\partial \Sigma)$, fixing a
representative.  Let $\varphi \in \PMap_c(\Omega)$ be its extension by
identity; since $\varphi$ is (compactly) supported in $\Sigma$, it
lifts to a quasi-isometry on $\Omega_\Sigma$ that extends to a
homeomorphism $\overline \varphi$ on $\overline \Omega_\Sigma$.
Since $\overline\iota \varphi_0 = \overline\varphi \,\overline\iota$
and $\sigma, \overline\iota$ are homotopy inverses, $\varphi_0
\sigma$ and $\sigma \overline \varphi$ are homotopic and thus
isotopic. For $a \in \omega \in \K(\Omega,\Sigma)$, $\sigma
\overline{p^{-1}(\varphi a)} = \sigma \overline\varphi
\overline{p^{-1}(a)}$ is isotopic to $\varphi_0 \sigma
\overline{p^{-1}(a)}$, whence the claim follows.
\end{proof}

\begin{corollary}\label{cor:pbact}
  Let $\phi \in \PMap(\Sigma)$.  Then there exists $\psi
  \in \PMap(\Omega)$ preserving $\K(\Omega,\Sigma)$ 
  such that for any $\omega \in
  \K(\Omega,\Sigma)$, $\phi\rho_\Sigma(\omega) =
  \rho_\Sigma(\psi \omega)$. \qed
\end{corollary}

Given an arc and curve model $\M$ on $\Omega$,
let $V(\M), E(\M)$ denote its vertex and edge sets,
respectively. 
If $\Sigma$ is a witness for $\M$ then
$V(\M) \subset \K(\Omega,\Sigma)$ and
$\rho_\Sigma$ defines a projection $V(\M) \to \K(\Sigma)$.

\begin{definition}\label{def:admis_cm}
 A connected arc and curve model $\M$ on $\Omega$ 
 is \textit{witness-cocompact} if 
 \begin{enumerate}[label=(\roman*)]
   \item $\M$ admits a (compact) witness,
   \item $\PMap_c(\Omega)$ preserves $V(\M)$ and extends to an action
     on $\M$, and 
   \item for any witness 
     $\Delta \subset \Omega$, there exists
     $L_\Delta$ such that if $(a,b) \in E(\M)$, 
     then $i(\rho_\Delta(a),\rho_\Delta(b)) \leq L_\Delta$. 
 \end{enumerate}
\end{definition}

\begin{remark*}
  When $\Omega$ is finite-type, it deformation retracts to a compact
witness $\overline \Omega$. Since in addition 
$i(\rho_\Delta(a),\rho_\Delta(b)) \leq i(a,b) = i(\rho_{\overline
\Omega}(a),\rho_{\overline \Omega}(b))$, 
(i) is tautological and in (iii) we may choose $L_\Delta =
L_{\overline \Omega}$ to be uniform.
\end{remark*}

\noindent
Witness-cocompact arc and curve models include many graphs of
contemporary interest on infinte-type surfaces, 
including the ray graph, the omnipresent arc
graph, and the grand arc graph.

\subsubsection{Arc and curve models on witnesses} 
Let $\Sigma
\subset \Omega$ be a witness for a witness-cocompact
arc and curve model $\M$ on $\Omega$.  We construct a cocompact 
arc and curve model $\M_\Sigma$ on $\Sigma$ for which the projection
$\rho_\Sigma$ restricts to a Lipschitz map $\M \to \M_\Sigma$, 
along with a Lipschitz coarse section $\iota : \M_\Sigma \to \M$. 
It follows that $\M_\Sigma$ quasi-isometrically embeds into $\M$.

Let $V(\M_\Sigma) = \rho_\Sigma(V(\M))$ and let $(a,b) \in
E(\M_\Sigma)$ if and only if $a \neq b$ and there exist $\tilde a
\in \rho_\Sigma^{-1}(a), \tilde b \in \rho_\Sigma^{-1}(b)$ such that
$(\tilde a, \tilde b) \in E(\M)$.  It is immediate that $\rho_\Sigma
: V(\M) \to V(\M_\Sigma)$ extends to a surjective $1$-Lipschitz map 
$\rho_\Sigma : \M \to \M_\Sigma$, hence in particular 
since $\M$ is connected so is $\M_\Sigma$.  
Likewise, since $\M$ satisfies Definition~\ref{def:admis_cm}(iii),
so does $\M_\Sigma$ for uniform $L = L_{\Sigma}$.  
By Corollary~\ref{cor:pbact} 
$\PMap(\Sigma)$ acts naturally on $\M_\Sigma$, hence $\M_\Sigma$ is
a cocompact arc and curve model on $\Sigma$.

Fix any $\Map(\Sigma,\partial \Sigma)$-equivariant 
section $\iota : V(\M_\Sigma) \to V(\M)$, and let 
$\tilde a = \iota(a) \in \rho_{\Sigma}^{-1}(a)$.  We show that
$\iota$ is Lipschitz, hence extends to a Lipschitz coarse section
$\iota : \M_\Sigma \to \M$ for $\rho_\Sigma$. 
Since for any $(a,b) \in E(\M_\Sigma)$, $i(a,b) \leq L$, there are
finitely many $\Map(\Sigma,\partial \Sigma)$-orbits of edges in
$\M_\Sigma$.  Let \[
  M = \max_{(a,b) \in E(\M_\Sigma)/G} d_{\M}(\tilde a,\tilde b)
\]
where $G = \Map(\Sigma,\partial \Sigma)$.
Then $\iota$ is $M$-Lipschitz. 
We have shown: 

\begin{proposition}\label{prop:cm_embed}
  Let $\Sigma \subset \Omega$ be a 
  witness for a witness-cocompact arc and curve
  model $\M$ on $\Omega$.  There exists a cocompact arc and curve
  model $\M_\Sigma$ on $\Sigma$ which quasi-isometrically embeds
  into $\M$. \qed
\end{proposition}

\begin{remark*}\label{rmk:wccpt_alt}
  A connected arc and curve model $\M$ on $\Omega$ 
  with a natural action of $\PMap_c(\Omega)$ 
  is witness-cocompact if and only if it has a witness and 
  the projection $\M_\Delta$ is cocompact for every witness 
  $\Delta\subset \Omega$.
\end{remark*}

\subsection{Asymptotic dimension lower
bounds}\label{sec:asdim_bounds}   We first  
consider the asymptotic dimension of cocompact arc and curve
models on $\Sigma$, a finite-type non-pants hyperbolic surface.  
Up to deformation retraction, we assume $\Sigma$ is compact.  

\begin{remark*}\label{rmk:torus} 
If $\Sigma$ is a (closed) torus, then any cocompact arc and curve
model is quasi-isometric to the curve graph, hence a quasi-tree with
$\asdim = 1$.  Otherwise, 
if $\Sigma$ admits a non-empty cocompact arc and curve
model  (and in particular, a witness subsurface), 
then $\chi(\Sigma) \leq -1$ and $\Sigma \not \cong \Sigma_0^3$. 
\end{remark*}

Let $\M$ be a cocompact arc and curve model on $\Sigma$; 
 recall that
$(\M,\X^\M)$ is an asymphoric hierarchically hyperbolic space by
Theorem~\ref{thm:acm_hhs}.
Then in particular the \textit{rank} $\nu$ 
of $(\M,\X^\M)$ corresponds to the
largest cardinality of a set of pairwise disjoint, connected
witnesses in $\X^\M$. Since $(\M,\X^\M)$ is asymphoric, $\asdim \M \geq
\nu$ \cite[Thm.~1.15]{quasiflats} and $\M$ is $\delta$-hyperbolic if
and only if $\nu = 1$ \cite[Cor.~2.15]{quasiflats}. The lower bound
here will prove sufficient except when $\nu = 1$; we note that an
identical bound can be achieved by explicitly constructing
quasi-flats.

\subsubsection{The $\delta$-hyperbolic case}
Adapting the arguments in Section~\ref{sec:asdim}, we prove the
following:
\begin{theorem}\label{thm:cm_hyp}
  Let $\Sigma$ be a genus $g$ compact surface, 
  possibly with boundary.
  If $\M$ is a (non-empty) $\delta$-hyperbolic cocompact
  arc and curve model on $\Sigma$, then $\asdim \M \geq g -
  \lceil \frac 12 \chi(\Sigma) \rceil$. 
\end{theorem}

\noindent
If $\Sigma \cong \Sigma_1$, then the claim is immediate by
Remark~\ref{rmk:torus}.  Otherwise,
we may assume $\M$ is a cocompact marking graph
by Proposition~\ref{prop:ggm_to_mg}.  
For any $\M'$ a cocompact marking
graph on $\Sigma$ with $\X^\M \supset \X^{\M'}$, there exists a
functorial 
canonical coarse surjection $\iota : \M \to \M'$ such that $\pi_W
\circ \iota$ is uniformly 
coarsely $\pi_W$ for any $W \in \X^{\M'}$ \cite[\S 2.1]{hhs_gen}. 
In particular, $\X^{\mathcal{MC}(\Sigma)}$ is every essential,
non-peripheral subsurface in $\Sigma$ and $\X^{\cc \Sigma} =
\{\Sigma\}$, hence we have canonical maps
$\mathcal{MC}(\Sigma) \to \M \to \cc \Sigma$.

\begin{lemma}\label{lem:canon_align}
  Let $\M, \M'$ be cocompact marking graphs on $\Sigma$, a compact
  surface, such that $\X^\M \supset \X^{\M'}$, and let $\iota : \M
  \to \M'$ be the canonical coarse surjection.  If $\M$ is
  $\delta$-hyperbolic, then $\iota$ is
  coarsely alignment-preserving.
\end{lemma}
\noindent
We note that if $\M$ is $\delta$-hyperbolic, then $\nu(\M') \leq
\nu(\M) \leq 1$, hence $\M'$ is $\delta$-hyperbolic.  Recall that 
a path $\rho \subset X$ is a $D$-\textit{hierarchy path} for a
hierarchically hyperbolic space $(X,\mathscr{G})$ if it is a
$(D,D)$-quasi-geodesic and $\pi_\alpha\rho$ is a unparameterized
$(D,D)$-quasi-geodesic for all $\alpha \in \mathscr{G}$.

\begin{proof}
  Since $(\M,\X^\M)$ is hierarchically hyperbolic, 
  there exists $D > 0$ such that for any $x,y \in \M$, there exists
  a $D$-hierarchy path joining $x,y$ \cite[Thm.~4.4]{hhsii}.  
  Let $(x,z,y) \in \M^3$ be aligned and
  let $\gamma$ be the geodesic from $x$ to $y$ passing through $z$
  and $\rho$ the hierarchy path between $x,y$. By the Morse
  lemma, there exists a constant $M(D,\delta)$ 
   such that $\gamma,\rho$ are $M(D,\delta)$-Hausdorff close, 
   hence $d(z,\rho) \leq M(D,\delta)$.  
   For any $W \in \X^\M$, $\pi_W
   \rho$ is an unparameterized $(D,D)$-quasi-geodesic. 
   Applying the Morse
   lemma and that $\pi_W$ is $L$-Lipschitz for uniform $L$, it
   follows
   that $(\pi_W(x), \pi_W (z), \pi_W(y))$ are $K$-aligned where 
   $K = 2(M(D,\delta_0) + LM(D,\delta))$ 
   is uniform over $\M^3, \X^\M$ and 
   $\delta_0$ is a uniform hyperbolicity constant for curve
   graphs \cite{hpw}. 

   It follows that $\pi_W$ for $W \in \X^\M$ and 
   likewise $\pi_{W'}$ for $W'
   \in \X^{\M'}$ are $K'$-alignment preserving for uniform $K'$.
   Since $\X^{\M'} \subset \X^\M$, the distance formulas for
   $\M,\M'$ imply the claim.
\end{proof}

Suppose that $\M$ is a $\delta$-hyperbolic cocompact 
marking graph on a compact surface $\Sigma$ with genus $g$.
By Lemma~\ref{lem:canon_align}, the canonical map $\iota : \M \to
\cc\Sigma$ is coarsely alignment preserving, hence by
Theorem~\ref{thm:dtboundaries} $\partial \cc \Sigma $
 embeds into $\partial \M$. 
 To prove Theorem~\ref{thm:cm_hyp} it suffices to find a compact 
 subspace $Z \subset \partial \cc
 \Sigma$ such that $\dim Z \geq n \dfn g - 
1 -  \lceil \frac 12 \chi(\Sigma) \rceil$, since  by
 Proposition~\ref{prop:bdry_cpt} $\dim Z + 1 
 \leq \asdim \M$. Recall that 
 $\partial \cc \Sigma \cong \mathcal{EL}(\Sigma)$.  

 \begin{proposition}\label{prop:elSembed}
   Let $\Sigma$ be a genus $g$ compact hyperbolic surface and $S$
   the $(n+4)$-times punctured sphere, where $n = g - 1 -
  \lceil \frac 12 \chi(\Sigma) \rceil$. Then
   $\el(S)$ embeds into $\partial \cc \Sigma \cong \el(\Sigma)$.
 \end{proposition}

\begin{proof}
  For simplicity, we replace 
   the boundary components of $\Sigma$ with punctures, 
   noting that $\cc\Sigma \cong
   \cc(\Sigma \setminus \partial \Sigma)$.
Choose a hyperelliptic involution $\eta$ on $\Sigma$ 
that fixes at most one puncture and let $h : \Sigma \to S'$ be the
orbifold covering map obtained by quotienting by $\eta$.
Obtain $S$ by removing the cone points of $S'$: one verifies that
$S$ has $n + 4$ punctures.  By \cite{orbi}, $h$ induces a
quasi-isometric embedding $h_* : \cc S \to \cc \Sigma$, which has  
quasi-convex image by the Morse lemma. 
Hence $\el(S) \cong \partial \cc S \subset \partial \cc \Sigma$.  
\end{proof}


When $\Sigma$ is a sphere with four boundary components,
Theorem~\ref{thm:cm_hyp} is vacuously true.  Otherwise, 
from Theorem~\ref{thm:gabai} and the universal
embedding property of N\"obeling spaces, we obtain the desired
subspace $Z \subset \el(S) \subset \partial \cc
\Sigma$ and Theorem~\ref{thm:cm_hyp} follows. \sqed

\subsubsection{Lower bounds for infinite-type surfaces}
Given a witness-cocompact
arc and curve model $\M$ on an infinite-type surface
$\Omega$, let $w_\M \in \N\cup \{\infty\}$ denote the least upper
bound on cardinalities for  a set of pairwise-disjoint 
connected witnesses for $\M$. We consider two
cases:

\begin{enumerate}[label=\textit{(\roman*)}]
  \item \textit{$w_\M$ is infinite.} 
    For arbitrarily large $m \in \N$, we may choose a compact,
    essential subsurface $\Sigma \subset \Omega$ containing at least
    $m$ disjoint witnesses.  $\Sigma$ is a witness for $\M$, and any
    witness for $\M$ contained in $\Sigma$ is a witness for
    $\M_\Sigma$ by construction.  It follows that
    $\M_\Sigma$ is an asymphoric hierarchically
    hyperbolic space of rank $\nu \geq m$, hence by
    Proposition~\ref{prop:cm_embed} $\asdim \M \geq
    \asdim \M_\Sigma \geq m$.  $\asdim \M = \infty$.

  \item \textit{$w_\M = m$ is finite.}  Fix a collection of
    pairwise disjoint witnesses $\{W_i\}$ with cardinality $m$.  
    Fix $W_0$ among these such that $W_0$ lies in a complementary
    component $\Omega_0$ of $\bigcup_{i > 0} W_i$ of infinite type. 
    Let $\Sigma \subset \Omega_0$ be an enlargement of $W_0$ of
    arbitrarily negative $\chi(\Sigma)$:  
    $\Sigma$ is a witness for $\M$.
    Moreover, since any witness for $\M_\Sigma$ is a witness for
    $\M$ disjoint from the $W_{i > 0}$, any two connected 
    witnesses for
    $\M_\Sigma$ must intersect: $\M_\Sigma$ is an asymphoric
    hierarchically hyperbolic space of rank $\nu = 1$, hence
    $\delta$-hyperbolic.  By Proposition~\ref{prop:cm_embed} and 
    Theorem~\ref{thm:cm_hyp}, $\asdim \M \geq \asdim \M_\Sigma \geq
    - \frac 12 \chi(\Sigma)$, hence $\asdim \M = \infty$.
\end{enumerate}

\begin{theorem}\label{thm:cm_lower}
  Let $\M$ be a witness-cocompact arc and curve model on an infinite-type
  surface $\Omega$.  Then $\asdim \M = \infty$. \qed
\end{theorem}

\noindent
Theorem~\ref{thm:ma_asdim} follows from Theorems~\ref{thm:cm_hyp}
and \ref{thm:cm_lower}.  \sqed 

\section{Acknowledgements}

The author would like to thank Mladen Bestvina for his support and
advice, George Shaji for numerous helpful discussions, Yusen Long
for comments on the manuscript, and Priyam Patel for suggesting that
the results in Sections~\ref{sec:asdim} and \ref{sec:ga} be
generalized.
The author was supported by NSF awards no.\ 2304774 and 
no.\ 1840190: \textit{RTG: Algebra, Geometry, and Topology at 
  the University of Utah}. 

\bibliographystyle{amsalpha}
\bibliography{pac}

\providecommand{\bysame}{\leavevmode\hbox to3em{\hrulefill}\thinspace}
\providecommand{\MR}{\relax\ifhmode\unskip\space\fi MR }
\providecommand{\MRhref}[2]{%
  \href{http://www.ams.org/mathscinet-getitem?mr=#1}{#2}
}
\providecommand{\href}[2]{#2}
\begin{thebibliography}{HPW15}

\bibitem[AV20]{avbig}
Javier Aramayona and Nicholas~G. Vlamis, \emph{Big mapping class groups: an overview}, 2020.

\bibitem[BH99]{bh}
Martin~R. Bridson and Andr\'e Haefliger, \emph{Metric spaces of non-positive curvature}, Springer Berlin, Heidelberg, 1999.

\bibitem[BHS19]{hhsii}
Jason Behrstock, Mark~F. Hagen, and Alessandro Sisto, \emph{Hierarchically hyperbolic spaces {II}: Combination theorems and the distance formula}, Pacific Journal of Mathematics \textbf{299} (2019), no.~2, 257--338.

\bibitem[BHS21]{quasiflats}
\bysame, \emph{Quasiflats in hierarchically hyperbolic spaces}, Duke Mathematical Journal \textbf{170} (2021), no.~5, 909 -- 996.

\bibitem[BL08]{bleb}
Sergei Buyalo and Nina Lebedeva, \emph{Dimensions of locally and asymptotically self-similar spaces}, St. Petersburg Math. J. \textbf{19} (2008), 45--65.

\bibitem[BNV22]{grand}
Assaf Bar-Natan and Yvon Verberne, \emph{The grand arc graph}, 2022.

\bibitem[BS00]{bonk}
Mario Bonk and Oded Schramm, \emph{Embeddings of {Gromov} hyperbolic spaces}, Geometric and Functional Analysis \textbf{10} (2000), 243--284.

\bibitem[Buy06]{buyalo}
Sergei Buyalo, \emph{Asymptotic dimension of a hyperbolic space and capacity dimension of its boundary at infinity}, St. Petersberg Math. J. \textbf{17} (2006), 267--283.

\bibitem[Cal09]{danny}
Danny Calegari, \emph{Big mapping class groups and dynamics}, Blog post, \url{https://lamington.wordpress.com/2009/06/22/big-mapping-class-groups-and-dynamics}, 2009.

\bibitem[CPV21]{problemlist}
Yassin Chandran, Priyam Patel, and Nicholas~G. Vlamis, \emph{Infinite type surfaces and mapping class groups: Open problems}, \url{http://qcpages.qc.cuny.edu/~nvlamis/Papers/InfTypeProblems.pdf}, 2021.

\bibitem[DK18]{dk}
C.~Dru{\c{t}}u and M.~Kapovich, \emph{Geometric group theory}, Colloquium Publications, American Mathematical Society, 2018.

\bibitem[DT17]{align}
Spencer Dowdall and Samuel~J. Taylor, \emph{The co-surface graph and the geometry of hyperbolic free group extensions}, Journal of Topology \textbf{10} (2017), no.~2, 447--482.

\bibitem[FGM21]{fgm}
Federica Fanoni, Tyrone Ghaswala, and Alan McLeay, \emph{Homeomorphic subsurfaces and the omnipresent arcs}, Annales Henri Lebesgue \textbf{4} (2021), 1565--1593.

\bibitem[Gab14]{gabai}
David Gabai, \emph{{On the topology of ending lamination space}}, Geometry {\&} Topology \textbf{18} (2014), no.~5, 2683 -- 2745.

\bibitem[Has22]{yo}
Yo~Hasegawa, \emph{Gromov boundaries of non-proper hyperbolic geodesic spaces}, Tokyo Journal of Mathematics \textbf{45} (2022), no.~2, 319--331.

\bibitem[HPW15]{hpw}
Sebastian Hensel, Piotr Przytycki, and Richard Webb, \emph{1-slim triangles and uniform hyperbolicity for arc graphs and curve graphs}, Journal of the European Mathematical Society \textbf{17} (2015), no.~4, 755--762.

\bibitem[KB02]{kb}
Ilya Kapovich and Nadia Benakli, \emph{Boundaries of hyperbolic groups}, 2002.

\bibitem[Kla18]{klarreich}
Erica Klarreich, \emph{The boundary at infinity of the curve complex and the relative teichm\:uller space}, Groups, Geometry, and Dynamics \textbf{16} (2018).

\bibitem[Kop23a]{hhs_gen}
Michael~C. Kopreski, \emph{Multiarc and curve graphs are hierarchically hyperbolic}, 2023.

\bibitem[Kop23b]{pac}
\bysame, \emph{Prescribed arc graphs}, 2023.

\bibitem[MM00]{MMII}
Howard~A. Masur and Yair~N. Minsky, \emph{Geometry of the complex of curves {II}: Hierarchical structure}, Geometric \& Functional Analysis \textbf{10} (2000), 902--974.

\bibitem[MR20]{mannrafi}
Kathryn Mann and Kasra Rafi, \emph{Large scale geometry of big mapping class groups}, 2020.

\bibitem[N{\"o}b30]{nobeling}
G.~N{\"o}beling, \emph{{\"Uber eine n-dimensionale Universalmenge im $\mathbb{R}_{2n + 1}$}}, Mathematische Annalen \textbf{104} (1930), 71--80.

\bibitem[Po17]{phoon}
Witsarut Pho-on, \emph{Infinite unicorn paths and {G}romov boundaries}, Groups, Geometry, and Dynamics \textbf{11} (2017), no.~1, 353--370.

\bibitem[Ric63]{richards}
I.~Richards, \emph{On the classification of noncompact surfaces}, Transactions of the American Mathematical Society \textbf{106} (1963), 259--269.

\bibitem[Ros14]{rosendal}
Christian Rosendal, \emph{Large scale geometry of metrisable groups}, 2014.

\bibitem[RS09]{orbi}
Kasra Rafi and Saul Schleimer, \emph{{Covers and the curve complex}}, Geometry {\&} Topology \textbf{13} (2009), no.~4, 2141 -- 2162.

\bibitem[Sch]{curvenotes}
Saul Schleimer, \emph{Notes on the complex of curves}, \url{http://homepages.warwick.ac.uk/~masgar/Maths/notes.pdf}.

\bibitem[Wat16]{wata}
Yohsuke Watanabe, \emph{Intersection numbers in the curve graph with a uniform constant}, Topology and its Applications \textbf{204} (2016), 157--167.

\end{thebibliography}


\end{document}